\theoremstyle{plain}
\newtheorem{theorem}{Theorem}[section]
\newtheorem{lemma}[theorem]{Lemma}
\newtheorem{remark}[theorem]{Remark}
\newtheorem{corollary}[theorem]{Corollary}
\newtheorem{proposition}[theorem]{Proposition}
\theoremstyle{definition}
\newtheorem{question}{Question}
\newcommand{\cC}{\mathcal{C}}
\newcommand{\cG}{\mathcal{G}}
\newcommand{\K}{\mathbb{K}}
\newcommand{\F}{\mathbb {F}}
\newcommand{\PG}{\mathrm{PG}}
\newcommand{\abs}[1]{\lvert#1\rvert}
\def\zhou#1 {\fbox {\footnote {\ }}\ \footnotetext { From Yue: {\color{red}#1}}}
\def\daniele#1 {\fbox {\footnote {\ }}\ \footnotetext { From Daniele: {\color{blue}#1}}}
\title{Asymptotics of Moore exponent sets}
\author{Daniele Bartoli\textsuperscript{\,1}}
\author{Yue Zhou\textsuperscript{\,2\,$\dagger$}}
\address{\textsuperscript{1}Department of Mathematics and Computer Science, University of Perugia, 06123 Perugia, Italy}
\email{daniele.bartoli@unipg.it}
\address{\textsuperscript{2}Department of Mathematics, National University of Defense Technology, 410073 Changsha, China}
\address{\textsuperscript{$\dagger$}Corresponding Author}
\email{yue.zhou.ovgu@gmail.com}
\date{\today}
\keywords{Moore matrix; Maximum rank-distance code; Finite geometry; Hasse-Weil bound}
\subjclass[2010]{15A15, 14G50, 51E22}
\begin{document}
\begin{abstract}
	Let $n$ be a positive integer and $I$ a $k$-subset of integers in $[0,n-1]$. Given a $k$-tuple $A=(\alpha_0, \cdots, \alpha_{k-1})\in \F^k_{q^n}$, let $M_{A,I}$ denote the matrix $(\alpha_i^{q^j})$ with $0\leq i\leq k-1$ and $j\in I$. When $I=\{0,1,\cdots, k-1\}$, $M_{A,I}$ is called a Moore matrix which was introduced by E.\ H.\ Moore in 1896. It is well known that the determinant of a Moore matrix equals $0$ if and only if $\alpha_0,\cdots, \alpha_{k-1}$ are $\F_q$-linearly dependent. We call $I$ that satisfies this property a Moore exponent set. In fact, Moore exponent sets are equivalent to maximum rank-distance (MRD) code with maximum left and right idealisers over finite fields. It is already known that $I=\{0,\cdots, k-1\}$ is not the unique Moore exponent set, for instance, (generalized) Delsarte-Gabidulin codes and the MRD codes recently discovered in \cite{csajbok_MRD_maximum_idealisers_arxiv} both give rise to new Moore exponent sets. By using algebraic geometry approach, we obtain an asymptotic classification result: for $q>5$, if $I$ is not an arithmetic progression, then there exists an integer $N$ depending on $I$ such that $I$ is not a Moore exponent set provided that $n>N$.
\end{abstract}
\maketitle

\section{Introduction}

Let $q$ be a prime power and $n$ a positive integer. For a given $k$-tuple
$A:=(\alpha_0,\alpha_1,\ldots,\alpha_{k-1}) \in \F^k_{q^{n}}$, $k\leq n$,  a \emph{square Moore matrix} is defined as
\begin{equation*}
	M_{A}:=\left(
	\begin{matrix}
	\alpha_0 & \alpha_0^{q} & \cdots & \alpha_0^{q^{k-1}} \\ 
	\alpha_1 & \alpha_1^{q} & \cdots & \alpha_1^{q^{k-1}} \\ 
	\vdots&  \vdots & \ddots &  \vdots \\ 
	\alpha_{k-1} & \alpha_{k-1}^{q} & \cdots & \alpha_{k-1}^{q^{k-1}}
	\end{matrix} 
	\right),
\end{equation*}
which is a $q$-analog of the Vandermonde matrix introduced by Moore \cite{moore_two-fold_1896}. The determinant of $M_A$ can be expressed as
\begin{equation*}
\det(M_A)=\prod_{\mathbf{c}} (c_0\alpha_0 + c_1\alpha_1 +\cdots c_{k-1}\alpha_{k-1}),
\end{equation*}
where $\mathbf{c}=(c_0,c_1,\cdots,c_{k-1})$ runs over all direction vectors in $\F_q^k$, or equivalently we can say that $\mathbf{c}$ runs over $\PG(k-1,q)$. In other words, 
\begin{equation}\label{eq:property_Moore}
	\det(M_A)=0 \text{ if and only if }\alpha_0,\cdots,\alpha_{k-1} \text{ are }\F_q\text{-linearly dependent}.
\end{equation}
We call $\det(M_A)$ the \emph{Moore determinant}. 

We may replace the exponents of those elements in $M_A$ in the following way:
For $I=\{ i_0, i_1, \cdots, i_{k-1} \}\subseteq \mathbb{Z}_{\geq 0}$ and $A=(\alpha_0,\alpha_1,\ldots,\alpha_{k-1}) \in \F^k_{q^{n}}$, define
\begin{equation*}
M_{A,I}:=\left(
\begin{matrix}
\alpha_0^{q^{i_0}} & \alpha_0^{q^{i_1}} & \cdots & \alpha_0^{q^{i_{k-1}}} \\ 
\alpha_1^{q^{i_0}} & \alpha_1^{q^{i_1}} & \cdots & \alpha_1^{q^{i_{k-1}}} \\ 
\vdots&  \vdots & \ddots &  \vdots \\ 
\alpha_{k-1}^{q^{i_0}} & \alpha_{k-1}^{q^{i_1}} & \cdots & \alpha_{k-1}^{q^{i_{k-1}}}
\end{matrix} 
\right).
\end{equation*}

Besides $I=\{0,1,\cdots, k-1\}$, it is interesting to ask whether there exist other $I$ sharing the same property \eqref{eq:property_Moore}. Namely we would like to investigate the following research question.
\begin{question}\label{question:main}
	Determine the value of $q$, $n$ and $I$ such that $\det(M_{(\alpha_0,\ldots,\alpha_{k-1}),I})=0$ if and only if $\alpha_0,\ldots,\alpha_{k-1}$ are $\F_q$-linearly dependent for all $k$-tuples $(\alpha_0,\ldots,\alpha_{k-1})\in \mathbb{F}_{q^n}^k$.
\end{question}

For given $q$ and $n$, if $I$ is such that the condition in Question \ref{question:main} holds, then we say $I$ is a \emph{Moore exponent set} for $q$ and $n$.

Question \ref{question:main} is strongly related to \emph{maximum rank-distance codes} which are usually abbreviated to MRD codes. MRD codes have important applications in network coding and strong connections to semifield planes and linear sets in finite geometry; see \cite{sheekey_MRD_survey_arxiv} for a recent survey on them. It is already known that there are a huge number of inequivalent MRD codes consisting of $m\times n$ matrices over finite fields with $m-1<n$; see \cite{schmidt_number_MRD_2018}. However, there are only a few families of known MRD codes with $m=n$. In this case, every MRD code over $\F_q$ can be equivalently written as a set of $q$-polynomials. In particular, $I=\{i_0, i_1, i_2, \cdots, i_{k-1} \}$ is a Moore exponent set for $q$ and $n$ if and only if the set of $q$-polynomials
\[\cC=\left\{a_0 X^{q^{i_0}}+a_1 X^{q^{i_1}} + \cdots  a_{k-1} X^{q^{i_{k-1}}}: a_0,\cdots, a_{k-1}\in \F_{q^n} \right\}\]
defines an MRD code in $\F_q^{n\times n}$, i.e.\ each nonzero polynomial $f\in \cC$ has at most $q^k$ roots. The MRD code $\cC$ associated with $I$ has a special property: its right and left idealisers are both maximum; see \cite{liebhold_automorphism_2016,lunardon_kernels_2017} for details of the right (left) idealisers of MRD codes. For more details on this special type of MRD codes, see \cite{csajbok_MRD_maximum_idealisers_arxiv}. In \cite{napolitano_linear_arxiv}, the authors established a connection between the so-called  $h$-scattered linear sets and Moore exponent sets. We refer to \cite{csajbok_classes_2018,csajbok_newMRD_2017,csajbok_new_maximum_2018,lunardon_mrd-codes_2017,lunardon_generalized_2018,sheekey_new_arxiv,sheekey_new_2016,trombetti_family_2019} for more constructions of MRD codes and the links with finite geometries. 

It is easy to see that $I$ is a Moore exponent set if and only if $I+s=\{i+s: i \in I\}$ is so, whence we may always assume that the smallest element in $I$ is $0$. Besides $I=\{0,1,\cdots, k-1\}$, there are other known examples of Moore exponent sets.
\begin{itemize}
	\item $I=\{0,1,3\}$ and $\{0, 1, 2, 5\}$ for $n=7$ with odd $q$;
	\item $I=\{0,1,3\}$ and $\{0,1, 2, 3, 6\}$ for $n=8$ with $q\equiv 1\pmod{3}$;
	\item $I=\{0, d,\cdots, (k-1)d  \}$ for any $n$ satisfying $\gcd(d,n)=1$
\end{itemize}
The first two items have been discovered recently in \cite{csajbok_MRD_maximum_idealisers_arxiv}. The last one is equivalent to the so-called \emph{Delsarte-Gabidulin code} (sometimes also called a \emph{Generalized Gabidulin code}\cite{kshevetskiy_new_2005}).

It appears illusive to answer Question \ref{question:main} by giving a complete list of Moore exponent sets. Instead, we would like to present an asymptotic answer in this paper which also implies an asymptotic classification of MRD codes with maximum left and right idealisers. This result also partially answers Question 4.7 in \cite{csajbok_MRD_maximum_idealisers_arxiv}.
\begin{theorem}\label{th:main_Moore}
	Assume that $q>5$ and $I=\{0,i_1,i_2,\cdots, i_{k-1}\}$ with $0<i_1<\cdots<i_{k-1}$ is not an arithmetic progression. Then there exist an integer $N$ depending only on $I$ such that $I$ is not a Moore exponent set for $q$ and $n$ provided that $n>N$.
\end{theorem}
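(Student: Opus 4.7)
The plan is to translate ``$I$ is not a Moore exponent set'' into an existence question about $\F_{q^n}$-rational points on an algebraic variety defined over $\F_q$, and then to invoke the Hasse--Weil bound for $n$ large.

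First I would reformulate: by \eqref{eq:property_Moore} and the discussion following Question~\ref{question:main}, $I$ fails to be a Moore exponent set for $q,n$ iff there exist $\F_q$-linearly independent $\alpha_0,\ldots,\alpha_{k-1} \in \F_{q^n}$ with $\det(M_{(\alpha_0,\ldots,\alpha_{k-1}),I}) = 0$. The relevant object is therefore the hypersurface $\mathcal V \subset \mathbb A^k_{\bar{\F}_q}$, defined over $\F_q$, cut out by the Moore determinant, together with its ``trivial locus'' $\mathcal L$ of $\F_q$-linearly dependent tuples; the goal is to produce an $\F_{q^n}$-point of $\mathcal V \setminus \mathcal L$. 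To access Hasse--Weil, I would slice $\mathcal V$ by a suitable generic $\F_q$-rational linear subspace (equivalently, specialize all but two of the coordinates to a Frobenius-compatible explicit pair) to obtain a plane curve $\mathcal C$ whose degree is bounded by $\sum_j q^{i_j}$, a constant depending only on $I$. The trace $\mathcal C \cap \mathcal L$ is a finite union of $\F_q$-linear lines, each of which $\mathcal C$ contains as a component.

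The central step, and the main obstacle, is to prove that $\mathcal C$ has an absolutely irreducible component $\mathcal C_0$ that is not one of those trivial lines, together with a bound on its geometric genus $g$ purely in terms of $I$. Here the non-arithmetic-progression hypothesis is decisive: for $I = \{0,d,\ldots,(k-1)d\}$ with $\gcd(d,n)=1$ the Moore determinant is, up to scalar, the norm form of a $q^d$-semilinear endomorphism and factors completely into $\F_q$-linear forms (the Delsarte--Gabidulin case), so $\mathcal V = \mathcal L$ and no such $\mathcal C_0$ exists; for non-AP $I$, a monomial-support / Newton-polytope analysis of $\det(M_{A,I})$ ought to force the appearance of a genuinely non-linear irreducible factor. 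The genus of this factor can then be controlled by a Pl\"ucker-type bound $g \leq \binom{\deg \mathcal C_0 - 1}{2}$, refined by adjunction at the singularities if sharper estimates are required. Making this identification rigorous, and verifying that the absolutely irreducible component so produced is not absorbed into $\mathcal L$, is the delicate part of the argument.

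Once such a $\mathcal C_0$ is in hand, Hasse--Weil yields $\#\mathcal C_0(\F_{q^n}) \geq q^n + 1 - 2g\,q^{n/2} - O_I(1)$, whereas the $\F_{q^n}$-points of $\mathcal C_0$ that lie on any trivial line number at most (number of trivial lines)$\,\cdot \deg \mathcal C_0 = O_I(1)$ by B\'ezout, since $\mathcal C_0$ is not itself a trivial line. The hypothesis $q>5$ is what ensures that the Hasse--Weil main term strictly dominates this trivial count once $n$ exceeds a threshold $N = N(I)$, expressible as a polynomial in $g$ and $\deg \mathcal C_0$, hence depending only on $I$. A point of $\mathcal C_0(\F_{q^n}) \setminus \mathcal L$ then lifts to the required $\F_q$-linearly independent tuple annihilating the Moore determinant, establishing the theorem.
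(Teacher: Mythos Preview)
Your overall architecture---reduce to finding an $\F_{q^n}$-point on the Moore variety off the trivial locus, produce an absolutely irreducible component not contained in $\cG_k$, then apply a Hasse--Weil/Lang--Weil type bound---is exactly the paper's strategy. But the proposal has two genuine gaps.

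\textbf{The existence of the absolutely irreducible component is the whole argument, and you have not supplied it.} You write that ``a monomial-support / Newton-polytope analysis \ldots\ ought to force the appearance of a genuinely non-linear irreducible factor'' and concede this is ``the delicate part.'' It is in fact essentially the entire content of the theorem. The paper devotes Sections~\ref{sec:curve} and~\ref{sec:general} to this, and the arguments are not of Newton-polytope type at all. For $k=3$ the paper quotes a structural theorem of Borges on multi-Frobenius non-classical curves (Theorem~\ref{th:Borges}). For $k>3$ it separates two cases: if $i_2\neq 2i_1$ it runs an induction on $k$ via the tangent cone at the origin (Theorem~\ref{th:case1}, Lemma~\ref{lemmaGeneral}, Lemma~\ref{le:subvarieties}); if $i_2=2i_1$ it specializes $k-3$ coordinates to values in $\F_{q^n}$ (not $\F_q$), classifies the singularities of the resulting plane curve, bounds all pairwise intersection multiplicities of putative factors via Lemmas~\ref{le:intersection_number_m_m1_coprime} and~\ref{le:intersection_number_linear_term}, and reaches a contradiction with B\'ezout (Theorem~\ref{th:case2}). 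Your ``specialize to a Frobenius-compatible explicit pair over $\F_q$'' is not what happens and would not obviously work: the specialization is over $\F_{q^n}$ and must be chosen so that certain auxiliary determinants $N$ and $M$ behave correctly (Proposition~\ref{prop:i_2=2i_1}).

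\textbf{You have misplaced the role of the hypothesis $q>5$.} It is not needed to make the Hasse--Weil main term beat the trivial count; that comparison works for every prime power $q$ once $n$ is large (see Theorem~\ref{th:curve_main} and the final proof of Theorem~\ref{th:general_main}, which use only $n>4i_{k-1}+2$ and $n>\tfrac{13}{3}i_{k-1}+O(1)$). The constraint $q>5$ enters solely in the B\'ezout argument of Theorem~\ref{th:case2}: the inequality $\tfrac{2}{9}d^2>B_\tau$ between the product of degrees and the total intersection-multiplicity bound fails for small $q$ (and small $i_1$), so for $q\le 5$ one cannot conclude that $\mathcal{C}$ has a non-repeated absolutely irreducible $\F_{q^n}$-component by this method. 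In short, $q>5$ is about \emph{existence} of the component, not about \emph{counting} its rational points.
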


In fact, for $q\leq 5$, we can get the same result for almost each $I$ which is not an arithmetic progression. The precise conditions on $I$ and $q$ are presented in the following theorem, from which one can directly derive Theorem \ref{th:main_Moore}.  The main idea is to translate the determination of Moore exponent sets into an algebraic geometry problem. 
\begin{theorem}\label{th:main}
	Assume that $I=\{0,i_1,i_2,\cdots, i_{k-1}\}$ with $0<i_1<\cdots<i_{k-1}$ is not an arithmetic progression. Define $\mathcal{G}_k: G_k(X_1,\ldots,X_k)=0$ and $\mathcal{V}_I:  \frac{F_I(X_1,\ldots,X_k)}{G_k(X_1,\ldots,X_{k})}=0$, where 
	\begin{equation}\label{Eq:F_k}
	F_I(X_1,\ldots,X_k)=\det\left(
	\begin{matrix}
	X_1^{q^{i_0}} & X_1^{q^{i_1}} & \cdots & X_1^{q^{i_{k-1}}} \\ 
	X_2^{q^{i_0}} & X_2^{q^{i_1}} & \cdots &X_2^{q^{i_{k-1}}} \\ 
	\vdots&  \vdots & \ddots &  \vdots \\ 
	X_{k}^{q^{i_0}} & X_{k}^{q^{i_1}} & \cdots & X_{k}^{q^{i_{k-1}}}
	\end{matrix} 
	\right),
	\end{equation}
	and
	\begin{equation}\label{Eq:G_k}
	G_k(X_1,\ldots,X_k)=\det\left(
	\begin{matrix}
	X_1 & X_1^{q^{1}} & \cdots & X_1^{q^{{k-1}}} \\ 
	X_2 & X_2^{q^{1}} & \cdots &X_2^{q^{{k-1}}} \\ 
	\vdots&  \vdots & \ddots &  \vdots \\ 
	X_{k} & X_{k}^{q^{1}} & \cdots & X_{k}^{q^{{k-1}}}
	\end{matrix} 
	\right).
	\end{equation}
	Suppose that one of the following collections of conditions  is satisfied.
	\begin{enumerate}[label=(\alph*)]
		\item $i_2-i_0\neq 2(i_1-i_0)$;
		\item $i_2-i_0= 2(i_1-i_0)$, $k>3$ and $q\geq 7$;
		\item $i_2-i_0= 2(i_1-i_0)$, $k>3$, $q=3,4,5$ and $i_1-i_0>1$;
		\item $i_2-i_0= 2(i_1-i_0)$, $k>3$ and $q=2$ with $i_1-i_0>2$.
	\end{enumerate}
	There exists an integer $N$ such that $\mathcal{V}_I$ contains an $\F_{q^{n}}$-rational absolutely irreducible component and at least one $\mathbb{F}_{q^n}$-rational points not in $\mathcal{G}_k$ provided that $n>N$.
\end{theorem}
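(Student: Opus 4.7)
The plan is to translate the statement into an asymptotic point-counting problem on $\mathcal{V}_I$ and invoke a Lang--Weil / Hasse--Weil estimate. As a preliminary observation, $G_k$ divides $F_I$ in $\F_q[X_1,\ldots,X_k]$: indeed $G_k$ is the product of the pairwise non-proportional linear forms $c_0X_1+\cdots+c_{k-1}X_k$ over $\mathbf{c}\in\PG(k-1,q)$, and each of these also divides $F_I$ (by a row reduction using the corresponding linear dependence). Hence $\mathcal{V}_I$ is a genuine hypersurface in affine $k$-space, cut out by the polynomial $H_I := F_I/G_k$ defined over $\F_q$.

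The main step is to show that $H_I$ admits an absolutely irreducible factor defined over $\F_q$, giving a component $\mathcal{W}$ of $\mathcal{V}_I$ of dimension $k-1$. Once this is established, Lang--Weil yields
\[
    \#\mathcal{W}(\F_{q^n}) = q^{n(k-1)} + O\bigl(q^{n(k-3/2)}\bigr),
\]
while $\mathcal{W}\cap\mathcal{G}_k$ has dimension at most $k-2$ (one checks that $H_I$ and $G_k$ share no common factor by comparing leading monomials) and so contributes only $O(q^{n(k-2)})$ rational points. Choosing $N=N(I)$ large enough that the main term dominates both the error and the intersection, there is an $\F_{q^n}$-rational point of $\mathcal{V}_I$ lying outside $\mathcal{G}_k$, which is the required conclusion.

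To prove absolute irreducibility, my strategy would be to specialize: fix $X_3,\ldots,X_k$ to carefully chosen algebraic values in $\overline{\F_q}$ and analyze the resulting bivariate polynomial in $X_1,X_2$. Absolute irreducibility of this plane curve propagates back to a full-dimensional absolutely irreducible factor of $H_I$ by a standard specialization argument (a non-trivial factorization of $H_I$ would descend to a non-trivial factorization of a generic fiber). The hypothesis that $I$ is not an arithmetic progression is precisely what rules out the natural ``symmetric'' splittings of this bivariate curve; its absolute irreducibility can then be verified either by exhibiting a suitable smooth $\overline{\F_q}$-point or through a Newton-polygon analysis.

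The hard part will be cases~(b)--(d), where $\{i_0,i_1,i_2\}$ itself forms an arithmetic progression. There the naive specialization to the first three variables already reduces to a known reducible polynomial, so one cannot proceed by a direct reduction to $k=3$; instead one must isolate a factor of $H_I$ that genuinely mixes some $X_j$ with $j\geq 4$ and verify its absolute irreducibility under the stated numerical bounds. The thresholds $q\geq 7$, or $q\in\{3,4,5\}$ with $i_1>1$, or $q=2$ with $i_1>2$, correspond precisely to the regimes in which characteristic-$p$ coincidences (e.g.\ $q$-th power relations among low-degree monomials) do not cause this factor to split further; the finitely many small-parameter edge cases can then be dispatched by direct computation.
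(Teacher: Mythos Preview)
Your high-level architecture is sound and matches the paper's: reduce to finding an absolutely irreducible $\F_{q^n}$-rational component of $\mathcal{V}_I$, then use a point-count (Lang--Weil, or in the paper Zahid's explicit version) to produce a rational point off $\mathcal{G}_k$. The observation that $G_k\mid F_I$ and that $H_I$ and $G_k$ share no factor is also correct and used in the paper. Where your proposal falls short is that the central step---existence of the absolutely irreducible component---is asserted rather than proved, and your sketched route to it does not line up with what is actually needed.

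For case~(a) the paper does \emph{not} get irreducibility from a Newton-polygon or an ad hoc smooth-point check. It relies on Borges' theorem that the plane curve $\Psi(X_1,X_2,X_3)=F_{\{0,i_1,i_2\}}/H_d=0$ (with $d=\gcd(i_1,i_2)$) is absolutely irreducible whenever $i_2\neq 2i_1$; this is a nontrivial external input. From there the paper runs an induction on $k$: the tangent cone of $\mathcal{V}_I$ at the origin is the projective closure of $\mathcal{V}_{I'}$ with $I'=\{0,i_1,\ldots,i_{k-2}\}$, and a simple $\F_{q^n}$-rational point on that cone (guaranteed by Hasse--Weil once $n>4i_{k-1}+2$) yields, via a tangent-line lemma and the slicing lemma (``irreducible component in a hyperplane section lifts''), a non-repeated absolutely irreducible $\F_{q^n}$-rational component of $\mathcal{V}_I$. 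Your ``standard specialization argument'' is the right spirit, but it needs this tangent-cone structure and Borges' result to get off the ground.

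For cases~(b)--(d) your diagnosis of the thresholds is incorrect, and this is the real gap. The constraints $q\ge 7$, or $q\in\{3,4,5\}$ with $i_1>1$, or $q=2$ with $i_1>2$, do not encode ``characteristic-$p$ coincidences'' among monomials. They come from a quantitative B\'ezout argument on a plane section: one specializes $X_3,\ldots,X_{k-1}$ to values $z_1,\ldots,z_{k-3}\in\F_{q^n}$ with a certain minor nonvanishing (Proposition~4.3 guarantees such a choice when $I$ is not an arithmetic progression), obtains a plane curve $\mathcal{C}$, classifies its singular points, and bounds the total intersection number $\tau$ of any two putative components $\mathcal{A},\mathcal{B}$ by an explicit quantity $B_\tau$. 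If every $\F_{q^n}$-irreducible factor of $\mathcal{C}$ were absolutely reducible, one could split $\mathcal{C}=\mathcal{A}\cdot\mathcal{B}$ with $(\deg\mathcal{A})(\deg\mathcal{B})\ge \tfrac{2}{9}d^2$; the listed inequalities on $q$ and $i_1$ are exactly what make $\tfrac{2}{9}d^2>B_\tau$, contradicting B\'ezout. None of this is visible in your outline, and without it there is no mechanism producing the numerical thresholds in (b)--(d). So as written the proposal is a plausible plan for~(a) once Borges' theorem is invoked, but for~(b)--(d) it is missing the key idea.
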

The exact value of $N$ in Theorem \ref{th:main} will be provided in Theorems \ref{th:curve_main} and \ref{th:general_main}.

The rest parts of this paper are organized as follows. In Section \ref{sec:pre} we introduce some tools and results from algebraic geometry; in Section \ref{sec:curve} we investigate the curve case of Theorem \ref{th:main}; finally in Section \ref{sec:general} we consider the general case of Theorem \ref{th:main} and present a complete proof.

\section{Preliminaries}\label{sec:pre}
To prove Theorem \ref{th:main}, we have to convert the original question into a problem of algebraic varieties over finite fields. In this section, we introduce some tools from algebraic geometry which will be used in the later parts.

An algebraic hypersurface is an algebraic variety that may be defined by a single implicit equation. An algebraic hypersurface defined over a field $\K$  is \emph{absolutely irreducible}  if the associated polynomial is irreducible over every algebraic extension of $\K$. An absolutely irreducible $\mathbb{K}$-rational component of a hypersurface $\mathcal{V}$, defined by the polynomial $F$, is simply an absolutely irreducible hypersurface such that the associated polynomial has coefficients in $\K$ and it is a factor of $F$.

%The next lemma is a standard result on non-absolutely irreducible varieties which can be found in .
\begin{lemma}\cite[Lemma~10]{hernando_proof_2011}
	\label{le:splitting_of_irreducible_polys}
	Let $F\in\F_q[X_1,\dots,X_m]$ be a polynomial of degree $d$, irreducible over~$\F_q$. Then there exists a natural number $s\mid d$ such that, over its splitting field, $F$ splits into $s$ absolutely irreducible polynomials, each of degree $d/s$.
\end{lemma}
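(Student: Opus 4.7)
The plan is a short Galois-theoretic argument. I would first pass to the algebraic closure $\overline{\F_q}$ and write $F = c \cdot F_1 F_2 \cdots F_t$, where the $F_i$ are monic absolutely irreducible factors, unique up to ordering. Each $F_i$ has only finitely many coefficients, so lies in some $\F_{q^{n_i}}[X_1,\ldots,X_m]$; hence the whole factorization lives over a common finite extension $\F_{q^N}$, which plays the role of the splitting field in the statement.

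Next, I would consider the Frobenius automorphism $\phi\colon x \mapsto x^q$ of $\overline{\F_q}$ and its induced action on $\overline{\F_q}[X_1,\ldots,X_m]$ by coefficient-wise Frobenius. Since $F \in \F_q[X_1,\ldots,X_m]$, $\phi$ fixes $F$; by uniqueness of the absolutely irreducible factorization (up to scalars), $\phi$ permutes the set $\{F_1,\ldots,F_t\}$. The key step is to show that this permutation action is transitive. If it is not, the factors partition into $\phi$-orbits $O_1,\ldots,O_r$ with $r \geq 2$, and each partial product $\prod_{F_i \in O_j} F_i$ is $\phi$-invariant, hence lies in $\F_q[X_1,\ldots,X_m]$ (after a suitable normalization of leading coefficients). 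This would give a nontrivial $\F_q$-factorization of $F$, contradicting the hypothesis that $F$ is irreducible over $\F_q$.

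Transitivity then forces all the $F_i$ to have the same total degree, since $\phi$ preserves total degrees. Writing $d' = \deg F_i$, I would conclude $d = \deg F = t \cdot d'$, so $s := t$ divides $d$ and each $F_i$ has degree $d/s$, exactly as claimed. The only mild subtlety is the opening step of keeping track of leading coefficients so that the orbit products really lie in $\F_q[X_1,\ldots,X_m]$; once that is handled, the rest is a routine orbit argument with no serious obstacle.
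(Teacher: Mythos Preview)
Your argument is correct and is in fact the standard proof of this fact. The paper itself does not prove the lemma at all: it is quoted verbatim from \cite[Lemma~10]{hernando_proof_2011} and used as a black box, so there is no ``paper's proof'' to compare against. Your Galois/Frobenius orbit argument is exactly how the cited result is established.

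One small point worth making explicit in your write-up: when you write $F = c\,F_1\cdots F_t$ with the $F_i$ monic and absolutely irreducible, you are implicitly using that $F$ has no repeated absolutely irreducible factors. This is true because $\F_q$ is perfect: if some $F_i$ were repeated, then either $\gcd(F,\partial F/\partial X_j)$ would be a nontrivial proper factor of $F$ already over $\F_q$ for some $j$, or all partials would vanish and $F$ would be a $p$-th power in $\F_q[X_1,\dots,X_m]$; either way $F$ would fail to be irreducible over $\F_q$. Once that is noted, the orbit argument goes through exactly as you describe, and the leading-coefficient bookkeeping you flag is indeed the only remaining technicality.
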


\begin{lemma}\label{le:subvarieties}\cite[Lemma 2.1]{aubry_APN_2010}
Let $\mathcal{H}$ be a projective hypersurface and $\mathcal{X}$ a projective variety of dimension $n-1$ in $PG(n,q)$. If $\mathcal{X}\cap \mathcal{H}$ has a  non-repeated absolutely irreducible component defined over $\mathbb{F}_q$ then $\mathcal{X}$ has a  non-repeated absolutely irreducible component defined over $\mathbb{F}_q$.
\end{lemma}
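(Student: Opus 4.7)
\textbf{Proof proposal for Lemma \ref{le:subvarieties}.}

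The plan is to pass to a defining polynomial of $\mathcal{X}$ and translate the hypothesis on $\mathcal{X}\cap\mathcal{H}$ into a constraint on its factorization. Since $\mathcal{X}$ has dimension $n-1$ in $\mathrm{PG}(n,q)$, it is a hypersurface, so fix $F \in \F_q[X_0,\dots,X_n]$ defining $\mathcal{X}$ and $H \in \F_q[X_0,\dots,X_n]$ defining $\mathcal{H}$. Write $F=\prod_i F_i^{a_i}$ for the factorization into $\F_q$-irreducibles and, via Lemma \ref{le:splitting_of_irreducible_polys}, $F_i=\prod_{j=1}^{s_i} G_{ij}$ for the further splitting over $\bar{\F}_q$ into absolutely irreducible factors, with the $G_{ij}$ forming a single Galois orbit for each fixed $i$.

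Let $\mathcal{C}$ be the hypothesized non-repeated absolutely irreducible $\F_q$-rational component of $\mathcal{X}\cap\mathcal{H}$. If $\dim\mathcal{C}=n-1$, then $\mathcal{C}$ is already an $\F_q$-rational absolutely irreducible component of $\mathcal{X}$ and there is nothing to prove; otherwise $\dim\mathcal{C}=n-2$. Since $\mathcal{C}$ is absolutely irreducible and contained in $\mathcal{X}$, it lies inside some $V(G_{i_0 j_0})$, and the $\F_q$-rationality of $\mathcal{C}$ combined with the Galois action on the $G_{i_0 j}$ forces $\mathcal{C}\subseteq V(G_{i_0 j})$ for every $j=1,\dots,s_{i_0}$.

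The heart of the argument is a multiplicity computation. I would expand the intersection cycle $V(F)\cdot V(H)$ using additivity of divisorial intersection with a hypersurface and read off that the coefficient of $\mathcal{C}$ equals
\[
\sum_{i:\, \mathcal{C}\subseteq V(F_i)} a_i \sum_{j=1}^{s_i} \mu\bigl(V(G_{ij})\cdot V(H);\,\mathcal{C}\bigr),
\]
where each inner intersection multiplicity is at least $1$ because $\mathcal{C}$ is a codimension-one subvariety of $V(G_{ij})$ lying inside $V(H)$, and these multiplicities coincide across $j$ (for fixed $i$) by Galois symmetry. The non-repeated assumption on $\mathcal{C}$ pins this total down to $1$, forcing a single contributing index $i_0$ with $a_{i_0}=1$, $s_{i_0}=1$, and intersection multiplicity $1$. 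The equalities $a_{i_0}=s_{i_0}=1$ say exactly that $F_{i_0}$ is absolutely irreducible and appears in $F$ with multiplicity one, so $V(F_{i_0})$ is a non-repeated absolutely irreducible $\F_q$-rational component of $\mathcal{X}$.

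The main obstacle I expect is the bookkeeping around the multiplicity formula: one has to justify the cycle-level additivity $V(F_i^{a_i})\cdot V(H)=a_i\sum_j V(G_{ij})\cdot V(H)$ in this generality, and rule out degenerate cases such as $V(H)\supseteq V(G_{ij})$ for some $G_{ij}$ containing $\mathcal{C}$ — but such a situation would make $V(G_{ij})$ itself a dimension-$(n-1)$ subvariety of $\mathcal{X}\cap\mathcal{H}$, contradicting the hypothesis that $\mathcal{C}$ of dimension $n-2$ is a component, so it can be excluded at the outset.
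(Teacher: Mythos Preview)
The paper does not prove this lemma; it is simply quoted from \cite[Lemma 2.1]{aubry_APN_2010}, so there is no in-paper argument to compare your proposal against.

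Your approach is the standard one and is essentially correct: factor the defining polynomial of $\mathcal{X}$ first over $\F_q$ and then absolutely, use the Galois action to see that an $\F_q$-rational absolutely irreducible component $\mathcal{C}$ of $\mathcal{X}\cap\mathcal{H}$ lies inside every conjugate $V(G_{i_0 j})$, and then read off $a_{i_0}=s_{i_0}=1$ from the fact that the multiplicity of $\mathcal{C}$ in the intersection is $1$. The additivity you need is just additivity of the local intersection length $\ell\bigl(\mathcal{O}_{\mathbb{P}^n,\eta_{\mathcal{C}}}/(F,H)\bigr)$ in each factor of $F$, which holds because $\mathcal{O}_{\mathbb{P}^n,\eta_{\mathcal{C}}}$ is a two-dimensional regular local ring and $(F,H)$ is a system of parameters there once $V(G_{ij})\subseteq V(H)$ has been excluded, exactly as you argue in your last paragraph.

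One small caveat: your treatment of the side case $\dim\mathcal{C}=n-1$ is not quite complete. There $\mathcal{C}$ is a common component of $\mathcal{X}$ and $\mathcal{H}$; localizing at its generic point gives a DVR in which $F=u\pi^{a}$ and $H=v\pi^{b}$, and ``$\mathcal{C}$ non-repeated in $\mathcal{X}\cap\mathcal{H}$'' only says $\min(a,b)=1$, which does not force $a=1$. So you cannot conclude that $\mathcal{C}$ is non-repeated in $\mathcal{X}$ without an extra hypothesis such as $\mathcal{X}$ and $\mathcal{H}$ sharing no component---which is indeed the situation in every application of the lemma in this paper. This is a boundary-case bookkeeping issue, not a flaw in the main multiplicity argument.
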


Concerning the intersection number of two curves at a point, we need the following classical result which can be found in most of the textbooks on algebraic curves.
\begin{theorem}[B\'ezout's Theorem]\label{th:bezout}
	Let $\mathcal{A}$ and $\mathcal{B}$ be two projective plane curves over an algebraically closed field $\K$, having no component in common. Let $A$ and $B$ be the polynomials associated with $\mathcal{A}$ and $\mathcal{B}$ respectively. Then
	\[
	\sum_P I(P, \mathcal{A}\cap \mathcal{B})=(\deg A)(\deg B),
	\]
	where the sum runs over all points in the projective plane $\PG(2,\K)$.
\end{theorem}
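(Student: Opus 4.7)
My plan is to prove B\'ezout's Theorem by a Hilbert-function and Koszul-complex argument, which treats all intersection points simultaneously in one global computation. The strategy is to compute $\dim_\K (R/(A,B))_t$ for all sufficiently large $t$ in two different ways: once globally, from a free resolution of $R/(A,B)$, and once locally, as a sum over the intersection points. Equating the two answers will give exactly $\sum_P I(P, \mathcal{A} \cap \mathcal{B}) = (\deg A)(\deg B)$.

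Concretely, I would set $R = \K[X_0, X_1, X_2]$ with homogeneous coordinates on $\PG(2,\K)$ and homogenize $A$, $B$ to forms of degrees $m = \deg A$ and $n = \deg B$. The hypothesis that $\mathcal{A}$ and $\mathcal{B}$ share no component means $A$ and $B$ are coprime in the UFD $R$ and hence form a regular sequence, so the Koszul complex
\[
0 \longrightarrow R(-m-n) \longrightarrow R(-m) \oplus R(-n) \longrightarrow R \longrightarrow R/(A,B) \longrightarrow 0
\]
is exact (the first map sends $1\mapsto(B,-A)$, the second sends $(u,v)\mapsto Au+Bv$). Using $\dim_\K R_t = \binom{t+2}{2}$ and taking alternating sums of dimensions in degree $t$ gives
\[
\dim_\K (R/(A,B))_t \;=\; \binom{t+2}{2} - \binom{t-m+2}{2} - \binom{t-n+2}{2} + \binom{t-m-n+2}{2},
\]
and a direct simplification (for instance via the Hilbert series $(1-s^m)(1-s^n)/(1-s)^3$) shows that this equals the constant $mn$ for every $t \geq m+n-2$.

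For the local count, I would perform a projective change of coordinates so that no intersection point of $\mathcal{A}$ and $\mathcal{B}$ lies on the line $X_0 = 0$, then dehomogenize $A,B$ to $f,g \in \K[x,y]$. For $t$ large, multiplication by $X_0^t$ identifies $(R/(A,B))_t$ with $\K[x,y]/(f,g)$. Because $\mathcal{A}$ and $\mathcal{B}$ share no component, the zero set $V(f,g)$ is finite and $\K[x,y]/(f,g)$ is an Artinian $\K$-algebra; its maximal ideals are in bijection with the intersection points $P_1,\dots, P_r$, and the Chinese Remainder Theorem yields
\[
\K[x,y]/(f,g) \;\cong\; \bigoplus_{i=1}^{r} \mathcal{O}_{P_i,\mathbb{A}^2}/(f,g),
\]
whose total $\K$-dimension is by definition $\sum_{i} I(P_i, \mathcal{A} \cap \mathcal{B})$. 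Combining with the global Hilbert-polynomial computation from the previous step gives $\sum_i I(P_i,\mathcal{A}\cap\mathcal{B})=mn$, which is the claim.

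The main obstacle is making the identification $(R/(A,B))_t \cong \K[x,y]/(f,g)$ rigorous: one must check that the coordinate change can be chosen so that every intersection point is affine (a generic condition, possible because there are only finitely many), and then show that multiplication by $X_0^t$ induces an isomorphism from $(R/(A,B))_t$ to the affine quotient for $t$ large. This amounts to $X_0$ being a nonzerodivisor on $R/(A,B)$ in high degree, which in turn follows from the fact that $X_0$ misses the support of $V(A,B)$ together with standard 0-dimensional commutative-algebra arguments. Once this passage is settled, the CRT decomposition and the Koszul Hilbert-series computation are routine.
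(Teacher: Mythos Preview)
The paper does not give its own proof of this statement: B\'ezout's Theorem is quoted as a classical result ``which can be found in most of the textbooks on algebraic curves'' and is used only as a black box later in Theorem~\ref{th:case2}. So there is nothing to compare against on the paper's side.

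Your proposal is a correct and standard route to B\'ezout: compute the Hilbert function of $R/(A,B)$ from the Koszul resolution to get the constant $mn$ in high degree, then identify that constant with $\dim_\K \K[x,y]/(f,g)$ after moving the intersection points into an affine chart, and finally split the Artinian quotient by CRT into local pieces whose lengths are the intersection multiplicities. The one place to be careful is exactly the step you flagged: showing that the dehomogenization map $(R/(A,B))_t \to \K[x,y]/(f,g)$ is an isomorphism for $t\gg 0$. The clean way to justify it is to note that, since $X_0=0$ misses $V(A,B)$, the ideal $(A,B,X_0)$ is $\mathfrak{m}$-primary, so $R/(A,B,X_0)$ is finite-dimensional; hence multiplication by $X_0$ on $R/(A,B)$ is surjective in large degree, and then bijective because the Hilbert function has already stabilized at $mn$. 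With that in place, the rest of your outline goes through without difficulty.
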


We also need the following results to estimate the intersection number, which is not difficult to prove (see Janwa, McGuire, and Wilson~\cite[Proposition 2]{janwa_double-error-correcting_1995}).
\begin{lemma}
	\label{le:intersection_number_m_m1_coprime}
	Let $F$ be  a polynomial in $\F_q[X,Y]$ and suppose that $F=AB$. Let $P=(u,v)$ be a point in the affine plane $\mathrm{AG}(2,q)$ and write
	\[
	F(X+u,Y+v)=F_m(X,Y)+F_{m+1}(X,Y)+\cdots,
	\]
	where $F_i$ is zero or homogeneous of degree $i$ and $F_m\ne 0$. Let~$L$ be a linear polynomial and suppose that $F_m=L^m$ and $L\nmid F_{m+1}$. Then $I(P, \mathcal{A}\cap \mathcal{B})=0$, where $\mathcal{A}$ and $\mathcal{B}$ are the curves defined by $A$ and $B$ respectively.
\end{lemma}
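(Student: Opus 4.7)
The plan is to localize at $P$ and read off the factorizations of $A$ and $B$ from the assumption $F_m=L^m$ together with unique factorization in $\overline{\F_q}[X,Y]$; the non-divisibility condition on $F_{m+1}$ then forces one of the two curves to avoid $P$ altogether, which is equivalent to the intersection multiplicity being zero.

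First I would translate so that $P=(0,0)$ and expand
\[A=A_a+A_{a+1}+\cdots,\qquad B=B_b+B_{b+1}+\cdots,\]
by homogeneous degree, with $A_a\neq 0$, $B_b\neq 0$; here $a=\mathrm{mult}_P(\mathcal A)$ and $b=\mathrm{mult}_P(\mathcal B)$. Recall the elementary fact that $I(P,\mathcal A\cap\mathcal B)>0$ if and only if $P$ lies on both curves, so the lemma reduces to showing $a=0$ or $b=0$. Multiplying out $F=AB$ and isolating the bottom homogeneous component yields $m=a+b$ and the identity $A_aB_b=L^m$ of homogeneous polynomials. Since $L$ is irreducible and $\overline{\F_q}[X,Y]$ is a unique factorization domain, this forces
\[A_a=\lambda L^a,\qquad B_b=\lambda^{-1}L^b,\]
for some nonzero scalar $\lambda\in\overline{\F_q}$.

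Next I would argue by contradiction. Assume $a\geq 1$ and $b\geq 1$; then comparing the degree-$(m+1)$ pieces on both sides of $F=AB$ gives
\[F_{m+1}=A_aB_{b+1}+A_{a+1}B_b=\lambda L^a B_{b+1}+\lambda^{-1}L^b A_{a+1}.\]
Since $a,b\geq 1$, both summands on the right are divisible by $L$, so $L\mid F_{m+1}$, contradicting the hypothesis. Hence one of $a$, $b$ vanishes; that factor is a unit in the local ring $\mathcal{O}_P$, and therefore $I(P,\mathcal A\cap\mathcal B)=0$.

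The only real step is extracting the factorizations $A_a=\lambda L^a$ and $B_b=\lambda^{-1}L^b$ from the identity $A_aB_b=L^m$, and this is immediate from unique factorization of binary forms (or directly from the splitting of homogeneous polynomials in two variables into linear factors over the algebraic closure). After that the argument reduces to a one-line comparison of $(m+1)$-st homogeneous components, so I do not anticipate any serious obstacle; the proof should take just a few lines, and it is insensitive to whether $L$ is $\F_q$-rational or only defined over $\overline{\F_q}$.
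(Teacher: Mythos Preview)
Your argument is correct. The paper does not actually give its own proof of this lemma; it merely states that the result ``is not difficult to prove'' and cites Janwa, McGuire, and Wilson~[Proposition~2] for it. Your proof---factoring the tangent cone $F_m=A_aB_b=L^m$ via unique factorization to get $A_a=\lambda L^a$, $B_b=\lambda^{-1}L^b$, and then reading off $L\mid F_{m+1}$ from the degree-$(m+1)$ identity $F_{m+1}=A_aB_{b+1}+A_{a+1}B_b$ whenever $a,b\ge 1$---is exactly the standard short argument one expects here, and is in the same spirit as the cited reference.
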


The next result was proved in \cite[Lemma 4.3]{schmidt_planar_2014} for $q$ even case. Actually it still holds when $q$ is odd and  its proof is the same.
\begin{lemma}
	\label{le:intersection_number_linear_term}
	Let $F$ be a polynomial in $\F_q[X,Y]$ and suppose that $F=AB$. Let $P=(u,v)$ be a point in the affine plane $\mathrm{AG}(2,q)$ and write
	\[
	F(X+u,Y+v)=F_m(X,Y)+F_{m+1}(X,Y)+\cdots,
	\]
	where $F_i$ is zero or homogeneous of degree $i$ and $F_m\ne 0$. Let $L$ be a linear polynomial and suppose that $F_m=L^m$, $L \mid F_{m+1}$, $L^2 \nmid F_{m+1}$. Then $I(P, \mathcal{A}\cap \mathcal{B})=0$ or $m$, where $\mathcal{A}$ and $\mathcal{B}$ are the curves defined by $A$ and $B$ respectively.
\end{lemma}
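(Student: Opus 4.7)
The plan is to translate Theorem \ref{th:main} into a question about absolute irreducibility of, and $\F_{q^n}$-rational points on, a plane curve, and to close with the Hasse--Weil bound. The passage from the ambient hypersurface $\mathcal V_I$ to a plane curve is standard via Lemma \ref{le:subvarieties}; the substantive work is verifying absolute irreducibility, and that is where the four cases (a)--(d) enter.

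\emph{Reduction to a curve.} Since every $\F_q$-linear form dividing $G_k$ also divides $F_I$ (both vanish whenever $X_1,\ldots,X_k$ are $\F_q$-linearly dependent), and the linear factors of $G_k$ are distinct, $H_I:=F_I/G_k\in\F_q[X_1,\ldots,X_k]$ is a polynomial whose total degree depends only on $I$. Fix $c_3,\ldots,c_k\in\F_q$ generically chosen so that leading coefficients do not degenerate, and let $\mathcal C_I\subset\mathbb A^2$ be the plane curve $H_I(X,Y,c_3,\ldots,c_k)=0$. By iterating Lemma \ref{le:subvarieties}, it suffices to show that $\mathcal C_I$ carries a non-repeated absolutely irreducible $\F_q$-rational component and contains an $\F_{q^n}$-rational point outside $\mathcal G_k$.

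\emph{Absolute irreducibility.} Suppose for contradiction that the specialized $H_I$ admits a nontrivial factorization $AB$ over $\overline{\F_q}$, with corresponding components $\mathcal A,\mathcal B$. At a candidate singular point $P=(u,v)$ of $\mathcal C_I$ (natural choices come from the affine locus of $\mathcal G_k$), expand $H_I(X+u,Y+v)=F_m+F_{m+1}+\cdots$ as in Lemmas \ref{le:intersection_number_m_m1_coprime}--\ref{le:intersection_number_linear_term}. Because $F_I$ is a determinant of $q$-powers, the lowest form $F_m$ is driven by the $2\times 2$ minor on the two smallest exponents $i_0,i_1$ and factors as a pure power $L^m$ of a single linear form. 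The next form $F_{m+1}$ carries the contribution of $i_2$: in case (a), $i_2-i_0\neq 2(i_1-i_0)$ forces $L\nmid F_{m+1}$ and Lemma \ref{le:intersection_number_m_m1_coprime} gives $I(P,\mathcal A\cap\mathcal B)=0$; in cases (b)--(d), $L\mid F_{m+1}$ but $L^2\nmid F_{m+1}$ provided the case-specific hypotheses on $q$ and $i_1-i_0$ prevent small-characteristic cancellations, and Lemma \ref{le:intersection_number_linear_term} gives $I(P,\mathcal A\cap\mathcal B)\in\{0,m\}$. Aggregating the local contributions over $\mathcal A\cap\mathcal B$ and comparing with B\'ezout (Theorem \ref{th:bezout}) then contradicts $\deg A\cdot\deg B=\sum_P I(P,\mathcal A\cap\mathcal B)$. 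Lemma \ref{le:splitting_of_irreducible_polys} finally excludes any residual $\F_q$-irreducible factor that fails to be absolutely irreducible.

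\emph{Hasse--Weil and the main obstacle.} Let $\mathcal C^*$ be the absolutely irreducible $\F_q$-rational component of $\mathcal C_I$ just produced, of degree $d$ bounded only in terms of $I$. The Hasse--Weil bound for possibly singular plane curves yields
\[
\#\mathcal C^*(\F_{q^n})\geq q^n+1-(d-1)(d-2)\,q^{n/2}-c_I
\]
with $c_I$ depending only on $I$; since $\mathcal C^*\cap\mathcal G_k$ has at most $d\cdot\deg G_k$ points, for $n$ beyond an explicit $N(I)$ there is a point in $\mathcal C^*(\F_{q^n})\setminus\mathcal G_k$, which by Lemma \ref{le:subvarieties} lifts to the required $\F_{q^n}$-point of $\mathcal V_I$. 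The chief difficulty lies in the coefficient computation underpinning the $L^2\nmid F_{m+1}$ assertion of Step~2 in cases (b)--(d): one must expand the relevant $3\times 3$ sub-determinant of $F_I$ explicitly and track when the coefficient of $L^2$ survives in characteristic $p$. This is exactly where the split into $q\geq 7$, $q\in\{3,4,5\}$ with $i_1-i_0>1$, and $q=2$ with $i_1-i_0>2$ is forced, since each excluded regime produces a spurious Frobenius cancellation that the elementary singular-point argument cannot overcome; in those cases one would need either a different singular point or a higher-order jet analysis.
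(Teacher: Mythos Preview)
Your proposal does not address the stated lemma at all. Lemma~\ref{le:intersection_number_linear_term} is a purely local assertion about a single point $P$: if the initial form of $F(X+u,Y+v)$ is $L^m$ and $L$ divides $F_{m+1}$ exactly once, then for \emph{any} factorization $F=AB$ the local intersection number $I(P,\mathcal A\cap\mathcal B)$ is either $0$ or $m$. What you have written is instead an outline of the proof of Theorem~\ref{th:main}: you slice $\mathcal V_I$ down to a plane curve, invoke Lemmas~\ref{le:intersection_number_m_m1_coprime} and~\ref{le:intersection_number_linear_term} as black boxes to bound intersection numbers at the singular points, compare with B\'ezout, and finish with Hasse--Weil. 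Nowhere do you justify the conclusion of the lemma itself; you merely use it.

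For comparison, the paper does not give a proof of this lemma either: it cites \cite[Lemma~4.3]{schmidt_planar_2014} and remarks that the same argument carries over to odd $q$. An actual proof has nothing to do with $\mathcal V_I$, $\mathcal G_k$, or the case split (a)--(d). One writes $A(X+u,Y+v)=A_a+A_{a+1}+\cdots$ and $B(X+u,Y+v)=B_b+B_{b+1}+\cdots$ with $a+b=m$; from $A_aB_b=F_m=L^m$ one gets $A_a=\lambda L^a$ and $B_b=\mu L^b$. If $a=0$ or $b=0$ then $P$ is not on one of the two curves and $I(P,\mathcal A\cap\mathcal B)=0$. Otherwise the degree-$(m+1)$ relation $A_aB_{b+1}+A_{a+1}B_b=F_{m+1}$, together with the hypothesis that $L$ divides $F_{m+1}$ exactly once, forces $L\nmid A_{a+1}$ and $L\nmid B_{b+1}$, and a short computation with the intersection-number axioms (or equivalently with local parametrizations of the branches tangent to $L$) yields $I(P,\mathcal A\cap\mathcal B)=m$. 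None of this appears in your write-up.
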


The original Hasse-Weil bound is given in terms of genera of curves. Here we only need a weak version of it.
\begin{theorem}[Hasse-Weil Theorem]\label{th:HW-bound}
	For an absolutely irreducible curve $\cC$ in $\PG(2,q)$, then
	\[ \abs{ \# \cC(\F_q) - q-1 }\leq (d-1)(d-2)\sqrt{q},  \]
	where $d$ is the degree of the defining polynomial for $\cC$.
\end{theorem}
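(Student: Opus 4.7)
The plan is to deduce the stated inequality from the classical Hasse-Weil theorem for smooth projective curves together with the arithmetic genus bound for plane curves. Since $\cC$ may be singular, the first step is to pass to the normalization $\pi\colon\widetilde{\cC}\to\cC$, which is an absolutely irreducible smooth projective curve defined over $\F_q$; write $g$ for its geometric genus.

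The second step is to invoke the Hasse-Weil theorem in its smooth form,
$$|\#\widetilde{\cC}(\F_q)-q-1|\le 2g\sqrt{q}.$$
This is the deep ingredient and I would take it as a black box. Two standard routes exist: Weil's original proof using intersection theory on $\widetilde{\cC}\times\widetilde{\cC}$ together with positivity of the Rosati involution on the Jacobian of $\widetilde{\cC}$; or the elementary Stepanov-Bombieri method, which constructs an auxiliary function on $\widetilde{\cC}$ with a prescribed number of zeroes, forces a contradiction with Riemann-Roch when $\#\widetilde{\cC}(\F_q)$ is too large or too small, and then passes from $\F_q$ to $\F_{q^2}$ to symmetrise the bound.

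The third step is to relate $g$ to the degree $d$. By the adjunction formula, the arithmetic genus of a plane curve of degree $d$ is $p_a=(d-1)(d-2)/2$, and one always has $g\le p_a$ with defect $p_a-g=\sum_{P\in\mathrm{Sing}(\cC)}\delta_P$ measured by the delta invariants of the singular points. The fourth step is to compare $\#\cC(\F_q)$ with $\#\widetilde{\cC}(\F_q)$: the map $\pi$ is bijective away from the singular locus, and a standard Aubry-Perret-type estimate bounds the discrepancy over the singular $\F_q$-points by a constant multiple of $p_a-g$. Putting the pieces together yields
$$|\#\cC(\F_q)-q-1|\le 2g\sqrt{q}+2(p_a-g)\le 2p_a\sqrt{q}=(d-1)(d-2)\sqrt{q},$$
using $\sqrt{q}\ge 1$ to absorb the singular contribution into the main term.

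The main obstacle is unambiguously Step~2: the smooth Hasse-Weil bound is a genuinely deep theorem whose proof lies well outside the scope of a preliminaries section, which is presumably why the authors state it here without proof and use it as a tool. Steps~3 and~4 are routine bookkeeping via the adjunction formula and the local analysis of plane-curve singularities, and they are essentially the only content one can add on top of the smooth theorem to obtain the stated uniform form in terms of the defining degree $d$.
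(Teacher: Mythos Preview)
The paper does not prove this statement at all: it is listed in the preliminaries as a known tool, introduced with the sentence ``Here we only need a weak version of it,'' and immediately followed by the next quoted result. There is therefore no proof in the paper to compare your proposal against; you correctly anticipated this when you wrote that the smooth Hasse--Weil bound ``is presumably why the authors state it here without proof and use it as a tool.''

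For what it is worth, your outline is the standard route to the degree-$d$ form of the bound: normalise, apply the genus form $\lvert\#\widetilde{\cC}(\F_q)-q-1\rvert\le 2g\sqrt{q}$, bound $g$ by the arithmetic genus $(d-1)(d-2)/2$ via adjunction, and control the discrepancy $\lvert\#\widetilde{\cC}(\F_q)-\#\cC(\F_q)\rvert$ over the singular locus by $p_a-g$ (this is the Aubry--Perret estimate). The only place to be slightly careful is the constant in Step~4: the Aubry--Perret bound gives $\lvert\#\widetilde{\cC}(\F_q)-\#\cC(\F_q)\rvert\le p_a-g$ rather than $2(p_a-g)$, but either way the absorption into $2p_a\sqrt{q}$ goes through since $\sqrt{q}\ge 1$. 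None of this appears in the paper, which simply quotes the inequality and moves on.
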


We also need two results concerning the number of rational points on an absolutely irreducible hypersurface.
\begin{theorem}\cite[Theorem 2]{zahid_nonsingular_2010}\label{th:zahid_0}
	Let $G$ be an absolutely irreducible hypersurface of degree $f$ defined over $\F_q$, and $H$ a hypersurface of degree $e$ defined over $\F_q$ not divisible by $G$. Then provided that 
	\[ q>\frac{1}{4}\left(  \alpha+ \sqrt{\alpha^2+4\beta} \right)^2 \]
	where $\alpha=(f-1)(f-2)$ and $\beta=5f^{13/3}+f(f+e-1)$, there is a nonsingular point of $G$ that is not a point of $H$.
\end{theorem}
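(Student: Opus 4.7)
The claim reduces to proving the strict inequality
\[
\#G^{\mathrm{ns}}(\F_q) \;>\; \#\bigl(G(\F_q)\cap H(\F_q)\bigr),
\]
where $G^{\mathrm{ns}}$ denotes the nonsingular locus of $G$, since any point counted on the left but not on the right is exactly a nonsingular $\F_q$-point of $G$ missing $H$. The plan is therefore to establish a sharp lower bound on $\#G(\F_q)$, an upper bound on the number of singular $\F_q$-rational points of $G$, and an upper bound on $\#\bigl((G\cap H)(\F_q)\bigr)$, and then combine them.

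The first input is an effective Lang--Weil estimate for an absolutely irreducible hypersurface of degree $f$ in $\mathbb{P}^n$:
\[
\#G(\F_q) \;\ge\; q^{n-1} - (f-1)(f-2)\, q^{n-3/2} - C(f)\, q^{n-2},
\]
where $C(f)$ can be taken of size $5f^{13/3}$. I would prove this by induction on $n$. The base case $n=2$ is the Hasse--Weil bound (Theorem~\ref{th:HW-bound}) applied to the normalization of the plane curve, together with an accounting of how many singular points a plane curve of degree $f$ can possess. The inductive step uses an effective Bertini-type statement asserting that a generic hyperplane slice of an absolutely irreducible $G$ remains absolutely irreducible of the same degree, after which one adds up the slice counts via the Schwartz--Zippel principle. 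The Hasse--Weil coefficient $(f-1)(f-2)$ is preserved under slicing, while the Bertini loss and the singular-point accumulation in each slice are absorbed into $C(f)$.

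Next I would bound singular and intersection points. Since $G$ is absolutely irreducible, its singular locus is defined by $G$ together with its partial derivatives and has dimension at most $n-2$, so $\#G^{\mathrm{sing}}(\F_q)\le C_1(f)\,q^{n-2}$ for an explicit polynomial $C_1(f)$: bound the top-dimensional components of the singular locus by B\'ezout's theorem and apply Schwartz--Zippel. For the intersection with $H$: because $G\nmid H$, $G\cap H$ is a proper closed subvariety of $G$, hence of dimension at most $n-2$ and total degree at most $fe$; a slicing argument that reduces dimension by generic linear sections, together with a componentwise Schwartz--Zippel estimate, yields $\#(G\cap H)(\F_q)\le f(f+e-1)\,q^{n-2}$, the sum $f+e-1$ replacing the naive product $fe$ by choosing coordinates that separate the degrees of $G$ and $H$ in the slicing step.

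Adding the singular and intersection bounds, and lumping the singular term together with the lower-order Lang--Weil error, produces a global upper bound $\beta\, q^{n-2}$ on the ``bad'' count with $\beta = 5f^{13/3} + f(f+e-1)$. The desired inequality then becomes
\[
q^{n-1} - (f-1)(f-2)\,q^{n-3/2} - \beta\,q^{n-2} \;>\; 0,
\]
which after dividing by $q^{n-2}$ is $q - \alpha\sqrt{q} - \beta > 0$ with $\alpha=(f-1)(f-2)$; solving this quadratic inequality in $\sqrt q$ recovers exactly the hypothesis $q>\tfrac14\bigl(\alpha+\sqrt{\alpha^2+4\beta}\bigr)^2$. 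The main obstacle is establishing the effective Lang--Weil estimate with a constant as tight as $5f^{13/3}$: this forces a careful, quantitative Bertini argument that tracks precisely how many hyperplanes must be excluded at each slicing step, together with an explicit inductive control of the singular contributions, rather than the usual qualitative existence of a good slice.
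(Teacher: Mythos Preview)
The paper does not prove this statement at all: Theorem~\ref{th:zahid_0} is quoted verbatim from Zahid's paper \cite{zahid_nonsingular_2010} and used as a black box in the proof of Theorem~\ref{th:general_main}. There is therefore no ``paper's own proof'' to compare your proposal against.

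That said, your outline is the correct strategy and is essentially how Zahid's result is proved, but your bookkeeping of the constant $\beta$ is off. The term $5f^{13/3}$ is exactly the lower-order error in the Cafure--Matera effective Lang--Weil bound for an absolutely irreducible hypersurface; it does \emph{not} absorb the singular-locus count as you suggest. The remaining term $f(f+e-1)$ is not an improved bound on $\#(G\cap H)(\F_q)$ obtained by a clever slicing that ``separates the degrees''---no such improvement over the B\'ezout degree $fe$ is available in general. Rather, $f(f+e-1)=f(f-1)+fe$: the singular locus of $G$ is contained in $G\cap\{\partial G/\partial X_0=0\}$ (after a linear change of coordinates so that this partial derivative does not vanish identically), a variety of dimension $n-2$ and degree at most $f(f-1)$, while $G\cap H$ has dimension $n-2$ and degree at most $fe$. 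The elementary bound that a projective variety of dimension $n-2$ and degree $D$ has at most $D\,p_{n-2}$ rational points (with $p_{n-2}=\#\mathbb{P}^{n-2}(\F_q)$) then gives the combined contribution $f(f+e-1)\,q^{n-2}$ up to lower-order terms. With this corrected attribution of the two summands in $\beta$, the final quadratic-in-$\sqrt{q}$ step you wrote is exactly right.
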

\begin{theorem}\cite[Theorem 3]{zahid_nonsingular_2010}\label{th:zahid_1}
	Let $F$ be an absolutely irreducible hypersurface of degree $f$ defined over $\mathbb{F}_q$. Then provided that 
	$$q>\frac{3f^4-4f^3+5f^2}{2},$$
	there is a nonsingular point of $F$.
\end{theorem}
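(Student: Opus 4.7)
The plan is to reduce Zahid's statement to the case of an absolutely irreducible plane curve, where the Hasse--Weil bound (Theorem~\ref{th:HW-bound}) is sharp enough to produce many $\F_q$-rational points, so that after subtracting the bounded number of singular ones at least one nonsingular $\F_q$-point remains.

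\textbf{Step 1 (reduction to a plane curve).} Let $F\subset\mathbb{P}^N$ be the hypersurface. I would iteratively intersect $F$ with $\F_q$-rational hyperplanes, at each stage choosing one so that the intersection remains an absolutely irreducible hypersurface of degree $f$ in the smaller ambient space. By a Bertini-type argument, the ``bad'' $\F_q$-hyperplanes (those destroying absolute irreducibility) form a proper closed subset of the dual projective space, so once $q$ is large enough relative to $f$ there exist good $\F_q$-choices at every stage. Iterating until the ambient space is $\mathbb{P}^2$ yields an absolutely irreducible plane curve $C$ of degree $f$ defined over $\F_q$, together with a finite set $\Sigma=\mathrm{Sing}(F)\cap\mathbb{P}^2$ whose size is bounded polynomially in $f$ (since $\mathrm{Sing}(F)$ has codimension at least $2$ in $F$ and degree controlled by $f$).

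\textbf{Step 2 (Hasse--Weil plus singularity bound).} By Theorem~\ref{th:HW-bound},
\[
\#C(\F_q)\geq q+1-(f-1)(f-2)\sqrt{q}.
\]
A standard genus argument shows that $C$ has at most $\binom{f-1}{2}=\tfrac{(f-1)(f-2)}{2}$ singular points, and the set $\Sigma$ contributes a further polynomial-in-$f$ correction. Requiring the Hasse--Weil lower bound to strictly exceed the total number of excluded points and setting $u=\sqrt{q}$ produces a quadratic inequality in $u$; solving it and rearranging yields the threshold $q>\tfrac{3f^4-4f^3+5f^2}{2}$. Any $\F_q$-point of $C$ that is smooth on $C$ and avoids $\Sigma$ is then automatically a nonsingular $\F_q$-point of $F$.

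\textbf{Main obstacle.} The genuinely delicate part is Step~1: making Bertini effective over the \emph{finite} field $\F_q$. Over an infinite field a generic hyperplane section of an absolutely irreducible hypersurface is again absolutely irreducible, but over $\F_q$ one must explicitly bound the number of bad $\F_q$-rational hyperplanes and check that it is strictly less than the total count $(q^{N+1}-1)/(q-1)$. A version of Poonen's Bertini theorem over finite fields, or a direct Jouanolou-style count of reducible fibres in the family of hyperplane sections, should supply this; crucially, the bound on bad hyperplanes must depend only on $f$ and not on $N$, so that the final threshold $\tfrac{3f^4-4f^3+5f^2}{2}$ is genuinely $N$-free. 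Once Step~1 is in place, the estimates in Step~2 are essentially routine arithmetic manipulations of the Hasse--Weil bound.
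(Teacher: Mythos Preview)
The paper does not prove this statement at all: Theorem~\ref{th:zahid_1} is quoted verbatim from \cite[Theorem~3]{zahid_nonsingular_2010} and used as a black box (in the induction step of Theorem~\ref{th:case1}). So there is no ``paper's own proof'' to compare against; any argument you supply is necessarily external.

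As for the sketch itself, the overall shape---reduce to a plane curve, apply Hasse--Weil, subtract singularities---is a reasonable heuristic, but several points are loose. First, the bookkeeping around $\Sigma=\mathrm{Sing}(F)\cap\mathbb{P}^2$ is unnecessary: if $p$ is singular on $F$ then all partials of the defining polynomial vanish at $p$, hence in particular the partials along the chosen $2$-plane vanish, so $p$ is already singular on $C$; thus any smooth $\F_q$-point of $C$ is automatically smooth on $F$ and no separate $\Sigma$-correction is needed. Second, your Step~2 arithmetic does not actually reproduce the constant $\tfrac{3f^4-4f^3+5f^2}{2}$: with $a=(f-1)(f-2)$ the inequality $q+1-a\sqrt{q}>\tfrac{a}{2}$ gives a threshold with leading term $a^2\approx f^4$, not $\tfrac{3}{2}f^4$, so the assertion that ``rearranging yields'' Zahid's exact bound is unsupported. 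Third, and most seriously, the effective Bertini step over $\F_q$ is the whole difficulty, and you have only named it, not carried it out: you need an explicit bound, independent of the ambient dimension $N$, on the $\F_q$-hyperplanes whose sections fail to be absolutely irreducible, and you need it to fit under the same threshold. Without that, the argument is a plan rather than a proof.
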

\begin{lemma}\label{lemmaGeneral}
Let $\mathcal{S}$ be a hypersurface containing $O=(0,0,\ldots,0)$ of the affine equation $F(X_1,\ldots,X_n)=0$, where

$$F(X_1,\ldots,X_n)=F_d(X_1,\ldots,X_n)+F_{d+1}(X_1,\ldots,X_n)+\cdots\textcolor{red}{,}$$
with $F_i$ the homogeneous part of degree $i$ of $F(X_1,\ldots,X_n)$ for $i=d,d+1,\cdots$. Let $P$ be an $\F_q$-rational simple point  of the variety 
 $$F_d(X_1,X_2,\ldots,X_{n-1},X_n)=0.$$ 
Then there exists an $\mathbb{F}_q$-rational plane $\pi$ through the line $\ell$ joining $O$ and $P$ such that $\pi\cap \mathcal{S}$ has $\ell$ as a non-repeated tangent $\mathbb{F}_q$-rational line at the origin and $\pi\cap \mathcal{S}$ has a non-repeated absolutely irreducible $\F_q$-rational component.
\end{lemma}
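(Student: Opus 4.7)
The strategy is to pick an explicit $\mathbb{F}_q$-rational plane $\pi$ through $\ell$ so that $\ell$ appears as a non-repeated tangent to the curve $\pi\cap\mathcal{S}$ at $O$, and then to use a Frobenius-invariance argument to promote the corresponding local branch to a globally defined $\mathbb{F}_q$-rational absolutely irreducible component of $\pi\cap\mathcal{S}$.

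For the construction of $\pi$, I would use that, since $P$ is an $\mathbb{F}_q$-rational simple point of $\{F_d=0\}$, the tangent hyperplane $T_P\{F_d=0\}=\{w:\nabla F_d(P)\cdot w=0\}$ is defined over $\mathbb{F}_q$. By Euler's identity $P\in T_P\{F_d=0\}$, so $\mathrm{span}(P)\subseteq T_P\{F_d=0\}$; the complement $\mathbb{F}_q^n\setminus T_P\{F_d=0\}$ is nonempty, and any vector $v$ in it is automatically linearly independent from $P$. Fix such an $\mathbb{F}_q$-rational $v$ and let $\pi$ be the $\mathbb{F}_q$-plane $\{sP+tv:s,t\in\overline{\mathbb{F}_q}\}$, which contains $\ell$ by construction. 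In the $(s,t)$-coordinates on $\pi$ the line $\ell$ is $\{t=0\}$, and the restriction of $F$ becomes $\tilde F(s,t)=F(sP+tv)$. The lowest-degree homogeneous part is $\tilde F_d(s,t)=F_d(sP+tv)$, and a short Taylor expansion using $F_d(P)=0$ gives
\[
\tilde F_d(s,t)=\bigl(\nabla F_d(P)\cdot v\bigr)\,s^{d-1}t+t^2\,R(s,t),
\]
for some homogeneous $R$. Since $v\notin T_P\{F_d=0\}$, the coefficient $\nabla F_d(P)\cdot v$ is nonzero, so $t$ divides $\tilde F_d$ exactly once; that is, $\ell$ is a non-repeated $\mathbb{F}_q$-rational tangent line to $\pi\cap\mathcal{S}$ at the origin.

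For the component statement, factor $\tilde F=\prod_i G_i^{e_i}$ over $\overline{\mathbb{F}_q}[s,t]$ into absolutely irreducible factors, and let $(G_i)_{d_i}$ denote the homogeneous part of $G_i$ of lowest degree. Then $\tilde F_d=\prod_i(G_i)_{d_i}^{e_i}$. Because $t$ is an irreducible form appearing to the first power in $\tilde F_d$, exactly one index---say $0$---satisfies $t\mid (G_0)_{d_0}$, and moreover $e_0=1$. The Frobenius $\phi$ permutes the absolutely irreducible factors of the $\mathbb{F}_q$-rational polynomial $\tilde F$, and $\phi(G_0)$ is still an absolutely irreducible factor whose tangent cone at $O$ is divisible by $\phi(t)=t$. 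By the uniqueness above, $\phi(G_0)$ is a scalar multiple of $G_0$; after rescaling, $G_0\in\mathbb{F}_q[s,t]$. This $G_0$ provides the required non-repeated, absolutely irreducible, $\mathbb{F}_q$-rational component of $\pi\cap\mathcal{S}$.

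The only delicate point is the uniqueness of the factor $G_0$ in the final paragraph, which is immediate from the factorization $\tilde F_d=\prod_i(G_i)_{d_i}^{e_i}$ together with the irreducibility of the linear form $t$. No bounds on $n$ or $q$ are needed; once $\pi$ has been selected correctly, the rest of the argument is essentially formal.
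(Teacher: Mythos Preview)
Your argument is correct and is essentially the paper's own proof, only phrased more intrinsically: where the paper normalizes $P=(0,\ldots,0,1)$ and chooses $v=(1,\lambda_2,\ldots,\lambda_{n-1},0)$ with $A=\alpha_1+\sum\alpha_i\lambda_i\neq 0$, you select $v\notin T_P\{F_d=0\}$ via $\nabla F_d(P)\cdot v\neq 0$, which is exactly the same condition. The remaining steps---showing $\ell$ is a simple factor of the restricted tangent cone and then invoking Frobenius invariance to pin down a non-repeated $\mathbb{F}_q$-rational absolutely irreducible component---match the paper's reasoning line for line.
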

\proof
Without loss of generality we can suppose that $P=(0,0,\ldots,0,1)$. This means that $$F_d(X_1,\ldots,X_n)=X_n^{d-1}\left(\sum_{i=1}^{n-1}\alpha_i X_i\right)+\cdots,$$
with at least one of the $\alpha_i$'s different from $0$. Hence there exists at least one $(n-2)$-tuple $(\lambda_2, \lambda_3, \ldots, \lambda_{n-1})\in\mathbb{F}_q^{n-2}$ such that the line $m$ given by $\{(t,\lambda_2t, \ldots, \lambda_{n-1}t, 1) : t \in \overline{\F_q}\}$ intersects the variety $F_d(X_1,\ldots,X_{n-1},1)=0$ with multiplicity $1$ at $P$. This means that 
$$A=\alpha_1+\sum_{i=2}^{n-1}\alpha_i \lambda_i\neq 0.$$

Let $\pi$ be the plane generated by $m$ and $O$. Then $\pi$ is the set of points
$$\{(t,\lambda_2 t,\ldots,\lambda_{n-1}t,u) : t,u \in \overline{\mathbb{F}_q}\}.$$
The intersection between $\pi$ and $\mathcal{S}$ is given by 

$$F(X,\lambda_2 X,\ldots,\lambda_{n-1}X,Y)=F_d(X,\lambda_2 X,\ldots,\lambda_{n-1}X,Y)+\cdots=
AY^{d-1}X+\cdots$$

This shows that $X||F(X,\lambda_2 X,\ldots,\lambda_{n-1}X,Y)$, which means that the line $X=0$ in the plane $\pi$ is a non-repeated  tangent line at the origin for the $\mathbb{F}_q$-rational curve $\pi\cap \mathcal{S}$. Now consider the unique absolutely irreducible component $\mathcal{C}$ of $\pi\cap \mathcal{S}$ having $X=0$ as tangent at the origin. Since $X=0$ is also $\mathbb{F}_q$-rational, such a component must be $\mathbb{F}_q$-rational since it is fixed by the Frobenius morphism. Also, $\mathcal{C}$ cannot be repeated in $\pi\cap \mathcal{S}$, otherwise $X=0$ would be a repeated tangent line at the origin for $\pi\cap \mathcal{S}$.
\endproof

The next result can be simply proved by counting argument. It tells us the number of $\F_{q^{n}}$-rational points in $\cG_m$.
\begin{lemma}\label{le:counting}
	Let $m\leq n$ be two positive integers. The total number of points $(x_1,x_2, \cdots, x_m)\in \PG(m-1,q^n)$ such that $x_i$'s are linearly dependent equals $q^{n(m-1)} -  (q^n-q)(q^n-q^2)\cdots(q^n-q^{m-1})+\frac{q^{n(m-1)}-1}{q^n-1}$.
\end{lemma}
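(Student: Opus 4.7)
The plan is to translate the count into an affine count first and then pass to the projective setting. Consider the set
\[
S=\{(x_1,\ldots,x_m)\in\F_{q^n}^m\setminus\{0\}: x_1,\ldots,x_m\text{ are }\F_q\text{-linearly dependent}\},
\]
and its complement in $\F_{q^n}^m\setminus\{0\}$, namely the set $T$ of $\F_q$-linearly independent $m$-tuples. Since $\F_{q^n}$ has $\F_q$-dimension $n\ge m$, the standard formula for ordered bases of an $m$-dimensional subspace of an $n$-dimensional $\F_q$-space yields
\[
|T|=(q^n-1)(q^n-q)(q^n-q^2)\cdots(q^n-q^{m-1}).
\]

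Next I would pass to $\PG(m-1,q^n)$ by dividing by the scaling action of $\F_{q^n}^\ast$. The key observation is that $\F_q$-linear (in)dependence descends to projective points: if $(x_1,\ldots,x_m)\in T$ and $\lambda\in\F_{q^n}^\ast$, then any relation $\sum c_i(\lambda x_i)=0$ with $c_i\in\F_q$ forces $\sum c_i x_i=0$, hence all $c_i=0$; so the whole $\F_{q^n}^\ast$-orbit of a point of $T$ stays in $T$, and similarly for $S$. Therefore each projective point has either all $q^n-1$ of its nonzero representatives in $T$ or all of them in $S$, and the number of projective points with $\F_q$-linearly independent coordinates is exactly
\[
\frac{|T|}{q^n-1}=(q^n-q)(q^n-q^2)\cdots(q^n-q^{m-1}).
\]

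Finally I would subtract this from the total number of projective points
\[
|\PG(m-1,q^n)|=\frac{q^{nm}-1}{q^n-1}=q^{n(m-1)}+q^{n(m-2)}+\cdots+q^n+1=q^{n(m-1)}+\frac{q^{n(m-1)}-1}{q^n-1},
\]
which gives precisely the formula in the statement. There is no real obstacle here; the only point that requires a line of justification is the descent of the linear-dependence property from affine representatives to projective points, which is the reason the division by $q^n-1$ is clean and the $T$-count translates directly.
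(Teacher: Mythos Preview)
Your argument is correct and is exactly the ``counting argument'' the paper alludes to without spelling out: count $\F_q$-linearly independent $m$-tuples in $\F_{q^n}^m$, divide by $q^n-1$ to pass to $\PG(m-1,q^n)$, and subtract from $\frac{q^{nm}-1}{q^n-1}=q^{n(m-1)}+\frac{q^{n(m-1)}-1}{q^n-1}$. Nothing further is needed.
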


\section{Curves}\label{sec:curve}
Let $i,j$ be positive integers such that $j>i$ and consider $I=\{0,i,j\}$. Let $\cG_3$ and $\mathcal{V}_I$ be the curves of the affine equations $G_3(X,Y,T)=0$ and $F_I(X,Y,T)/G_3(X,Y,T)=0$, respectively, where  $F_I$ and $G_3$ are as in \eqref{Eq:F_k} and \eqref{Eq:G_k}. Note that $\cG_3$ coincides with the set of points in $PG(2,\overline{\F_q})$ lying on the union of all lines defined over $\F_q$.

\begin{theorem}\cite{borges_multi-Frobenius_2009}\label{th:Borges}
	Assume that $\gcd(i,j)=1$ and $j>2$. The curve $\mathcal{V}_I$ is absolutely irreducible 
%	Let $g_q$ denote its genus. Then
%	\[ g_q=(q^{j-i}+q^i)\left(\frac{q^j}{2}-(q^2+q+1)\right) +(q+1)(q^2+q+1)\]
	and the set of singular points of $\mathcal{V}_I$ is either $PG(2,q^{j-i})$ or $PG(2,q^{j-i})\setminus PG(2,q)$, in which the latter case happens if and only if $i=1$. Moreover
	\[\mathcal{V}_I \cap \cG_3 = 
	\begin{cases}
		(PG(2,q^{j-i})\setminus PG(2,q)) \cap \cG_3, & \text{if }i=1;\\
		PG(2,q^{j-i})\cap \cG_3, &\text{otherwise}.
	\end{cases}
	\]
%	and 
%	\[ \#\cC(\F_{q^k})=
%	\begin{cases}
%		(q^j-q^2)(q^j-q) + (q^2+q+1)(q^j-q), & \text{if } k=j,\\
%		(q^i-q^2)(q^i-q), & \text{if } k=i,\\
%		0, & \text{if }k=1.
%	\end{cases} \]
\end{theorem}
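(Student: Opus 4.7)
My plan rests on the $q$-polynomial interpretation of $F_I$: a point $(x:y:t)\in\PG(2,\overline{\F_q})$ satisfies $F_I(x,y,t)=0$ exactly when the columns of $M_{(x,y,t),I}$ are linearly dependent over $\overline{\F_q}$, i.e.\ when there is a nonzero linearized polynomial $f(X)=c_0X+c_1X^{q^i}+c_2X^{q^j}$ vanishing on $x,y,t$. Because $\gcd(i,j)=1$, the root set of such an $f$ is an $\F_q$-subspace of $\overline{\F_q}$ of dimension at most $j$. Triples with $x,y,t$ already $\F_q$-dependent cut out the component $\cG_3$, while the residual curve $\mathcal{V}_I$ parameterizes triples that are $\F_q$-independent but jointly annihilated by some such $f$. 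A direct degree count gives $\deg\mathcal{V}_I=q^j+q^i-q-q^2$.

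For absolute irreducibility I would construct a rational parameterization of $\mathcal{V}_I$ via the incidence variety
\begin{equation*}
\Sigma=\bigl\{((c_0{:}c_1{:}c_2),(x{:}y{:}t)) : c_0x+c_1x^{q^i}+c_2x^{q^j}=0\text{ and similarly for }y,t\bigr\}
\end{equation*}
inside $\PG(2,\overline{\F_q})\times\PG(2,\overline{\F_q})$. The first projection is dominant onto the rational base $\PG(2,\overline{\F_q})$ with generic fiber the (irreducible) projectivization of the kernel of $f$ minus its $\F_q$-dependent locus, so $\Sigma$ is irreducible. Checking that the second projection is generically finite onto $\mathcal{V}_I$ -- this is the technical heart and uses the hypothesis $\gcd(i,j)=1$ to control the kernel dimension -- then transfers irreducibility to $\mathcal{V}_I$, and Lemma~\ref{le:splitting_of_irreducible_polys} confirms that no nontrivial Galois-conjugate splitting is available over $\F_q$.

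For the singular locus and the intersection with $\cG_3$, the pivotal observation is that whenever $a\in\F_{q^{j-i}}$ we have $a^{q^j}=a^{q^i}$, so for every $(x:y:t)\in\PG(2,q^{j-i})$ the $i$-th and $j$-th columns of $M_{(x,y,t),I}$ coincide and $F_I$ vanishes to order at least two. A direct Jacobian computation then shows that every such point off $\cG_3$ is singular on $\mathcal{V}_I$, and conversely that any singular point must satisfy $x^{q^{j-i}}=x$ up to simultaneous scaling of $(x{:}y{:}t)$. The distinction at $i=1$ comes from comparing the vanishing orders of $F_I$ and $G_3$ at $\PG(2,q)$: when $i=1$ the two orders coincide, so $F_I/G_3$ is nonvanishing at such points and they drop out of $\mathcal{V}_I$; when $i\geq 2$ an excess remains and $\PG(2,q)$ survives as a singular subset. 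An analogous case split gives the claimed $\mathcal{V}_I\cap\cG_3$, since any intersection point must simultaneously admit an $\F_q$-linear relation and be annihilated by some $f$, and the two constraints together force the coordinates into $\F_{q^{j-i}}$.

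The main obstacle will be absolute irreducibility: the degree bound is far too weak to conclude directly, and $F_I$, unlike $G_3$, has no transparent linear factorization over $\overline{\F_q}$. Making the incidence-variety argument rigorous requires a careful analysis of the locus where the kernel of $f$ has dimension strictly greater than three, and it is essentially here that the multi-Frobenius machinery of Borges enters.
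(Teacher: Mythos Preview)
The paper does not supply its own proof of this theorem: it is quoted verbatim from Borges' work on multi-Frobenius non-classical curves and used as a black box, so there is no in-paper argument to compare your proposal against. Any comparison would have to be with Borges' original proof, which proceeds through the St\"ohr--Voloch theory of Frobenius orders rather than via an incidence correspondence.

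That said, your irreducibility sketch has a genuine gap. You describe the fiber of the first projection $\Sigma\to\PG(2,\overline{\F_q})$ over a generic $(c_0{:}c_1{:}c_2)$ as ``the (irreducible) projectivization of the kernel of $f$,'' but the fiber is the set of \emph{triples} $(x{:}y{:}t)$ with $x,y,t\in\ker f$, not single elements of $\ker f$ up to scalar. Since $\ker f$ is a finite $\F_q$-vector space of dimension at most $j$, this fiber is a finite collection of points in $\PG(2,\overline{\F_q})$, not an irreducible positive-dimensional variety. With zero-dimensional fibers you cannot conclude irreducibility of $\Sigma$ from irreducibility of the base alone; you would need a transitivity statement for the monodromy action on the generic fiber, and nothing in your outline supplies this. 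The singular-locus and $\mathcal{V}_I\cap\cG_3$ discussions are more plausible as sketches, but the converse direction (that every singular point lies in $\PG(2,q^{j-i})$) and the claim that the two constraints ``force the coordinates into $\F_{q^{j-i}}$'' both need substantive arguments that you have not indicated.
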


By Lemma \ref{le:counting}, we have
\[ \# (PG(2,q^{k})\cap \cG_3) = (q^k-q+1)\frac{q^3-1}{q-1}.\]
Hence
\[ \#(\mathcal{V}_I \cap \cG_3)= 
	\begin{cases}
		(q^{j-1}-q)\frac{q^3-1}{q-1}, & \text{if }i=1;\\
		(q^{j-i}-q+1)\frac{q^3-1}{q-1}, &\text{otherwise}.
		\end{cases}
\]
By the Hasse-Weil theorem (see Theorem \ref{th:HW-bound}), the number of $\F_{q^n}$-rational points of $\mathcal{V}_I$ satisfies
\begin{align}
\nonumber	\#\mathcal{V}_I(\F_{q^n}) &\geq q^n+1- (\ell-1)(\ell-2)\sqrt{q^n}\\
\label{eq:HW-bound}	&\geq q^n+1-q^{2j+n/2}-2q^{j+i+n/2}-q^{2i+n/2},
\end{align}
where $\ell=q^j+q^i-q^2-q$ is the degree of $\frac{F_I}{G_3}$.

When $\gcd(i,j)=1$, we can derive that 
\[ \#\mathcal{V}_I(\F_{q^n})> \#(\mathcal{V}_I \cap \cG_3)\]
provided $n>4j+2$. %\zhou{I have checked this bound quite carefully. The extreme case happens when $j-i=1$ and $q=2$}

When $\gcd(i,j)=d$ and $j\neq 2i$, $\mathcal{V}_I$ has two components
\[ \frac{F_I(X,Y,T)}{G_3(X,Y,T)}=\frac{F_I(X,Y,T)}{H_d(X,Y,T)} \cdot \frac{H_d(X,Y,T)}{G_3(X,Y,T)},\]
where 
$$H_d(X,Y,T)=\left|
\begin{array}{lll}
X&X^{q^d}&X^{q^{2d}}\\
Y&Y^{q^d}&Y^{q^{2d}}\\
T&T^{q^d}&T^{q^{2d}}\\
\end{array}
\right|.$$
Suppose that $i=i'd$ and $j=j'd$. Let $\cC'$ and $\mathcal{H}$ be the curves defined by $\frac{F_I(X,Y,T)}{H_d(X,Y,T)}=0$ and $H_d(X,Y,T)=0$, respectively. By Theorem \ref{th:Borges}, $\cC'$ is absolutely irreducible. Here we are just considering Theorem \ref{th:Borges} on $q^{\prime}=q^d$ with exponents $i^\prime$ and $j^{\prime}$.  It is obvious that $\cG_3$ is a component of $\mathcal{H}$. 

The degree of $\frac{F_I(X,Y,T)}{H_d(X,Y,T)}$ is $\ell'= q^j+q^i-q^{2d}-q^d$. By the Hasse-Weil bound, we have \
\begin{align*}
	\#\mathcal{C}'(\F_{q^n})&\geq q^n+1- (\ell'-1)(\ell'-2)\sqrt{q^n}\\
                           	 &\geq q^n+1-q^{2j+n/2}-2q^{j+i+n/2}-q^{2i+n/2},
\end{align*}
which is the same as the lower bound of $\#\mathcal{V}_I(\F_{q^n})$ obtained in \eqref{eq:HW-bound}.

Therefore, one of the following two conditions implies that $\#\mathcal{V}_I(\F_{q^n})\geq \#\mathcal{C}'(\F_{q^n})> \#(\mathcal{V}_I \cap \cG_3)$. 
\begin{itemize}
	\item $\#\cC'(\F_{q^n})> \#(\cC' \cap \mathcal{H})$ which holds if $n>4j+2$;
	\item $\cG_3(\F_{q^n})\subsetneq \mathcal{ H}(\F_{q^n})$ which holds if $\gcd(n,d)>1$.
\end{itemize}

Therefore we have proved the following result.

\begin{theorem}\label{th:curve_main}
	Let $i,j$ be two positive integer such that $j>i$ and $j\neq 2i$.
	For integer $n$ satisfying $n>4j+2$ or $\gcd(n,i,j)>1$ and any prime power $q$, $\{0, i, j\}$ is not a Moore exponent set.
\end{theorem}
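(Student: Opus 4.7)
The plan is to produce, for $n$ in the claimed range, an $\mathbb{F}_{q^n}$-rational triple $(x,y,t)$ satisfying $F_I(x,y,t)=0$ but $G_3(x,y,t)\neq 0$. By \eqref{eq:property_Moore} together with the identification of $\cG_3$ with the union of $\mathbb{F}_q$-rational lines in $\PG(2,\overline{\mathbb{F}_q})$, such a triple exhibits an $\mathbb{F}_q$-linearly independent tuple $A$ with $\det(M_{A,I})=0$, which contradicts $\{0,i,j\}$ being a Moore exponent set. Thus it suffices to establish the strict inequality $\#\mathcal{V}_I(\mathbb{F}_{q^n}) > \#(\mathcal{V}_I\cap \cG_3)(\mathbb{F}_{q^n})$.

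First I would dispose of the coprime case $\gcd(i,j)=1$. Since $j\neq 2i$ forces $j\geq 3$, Theorem \ref{th:Borges} applies: $\mathcal{V}_I$ is absolutely irreducible of degree $\ell=q^j+q^i-q^2-q$, and its singular locus is confined to the small sub-plane $\PG(2,q^{j-i})$ (minus $\PG(2,q)$ when $i=1$). Hasse--Weil then delivers a lower bound of the shape $q^n - O(q^{2j+n/2})$ on $\#\mathcal{V}_I(\mathbb{F}_{q^n})$, while Lemma \ref{le:counting} gives an exact evaluation of $\#(\PG(2,q^{j-i})\cap \cG_3)$, producing an upper bound of the order $q^{3(j-i)+2}$ for the intersection. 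A direct comparison shows the lower bound strictly dominates whenever $n>4j+2$.

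For the case $d:=\gcd(i,j)>1$, I would exploit the factorization
\[ \frac{F_I}{G_3} \;=\; \frac{F_I}{H_d}\cdot \frac{H_d}{G_3}, \]
where $H_d$ is the Moore-type determinant with exponents $0,q^d,q^{2d}$; this is divisible by $G_3$ and itself divides $F_I$. Writing $i=i'd$ and $j=j'd$, the hypothesis $j\neq 2i$ gives $j'\neq 2i'$ (and $j'\geq 3$), so Theorem \ref{th:Borges}, applied over $\mathbb{F}_{q^d}$ to the now-coprime exponents $i',j'$, makes $\cC'\colon F_I/H_d=0$ absolutely irreducible of degree $\ell$ again. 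The same Hasse--Weil comparison as in the coprime case yields the required inequality for $n>4j+2$. In the alternative regime $\gcd(n,i,j)>1$, the extra factor $H_d$ already produces the required excess for free: setting $d':=\gcd(n,d)>1$, any triple in $\mathbb{F}_{q^{d'}}^3\subset \mathbb{F}_{q^n}^3$ that is $\mathbb{F}_{q^{d'}}$-linearly dependent but $\mathbb{F}_q$-linearly independent lies in $\mathcal{H}(\mathbb{F}_{q^n})\setminus \cG_3(\mathbb{F}_{q^n})\subseteq \mathcal{V}_I(\mathbb{F}_{q^n})\setminus \cG_3(\mathbb{F}_{q^n})$.

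The main technical obstacle is that Hasse--Weil controls only the \emph{total} number of $\mathbb{F}_{q^n}$-rational points on $\mathcal{V}_I$, without distinguishing those on $\cG_3$ from those off it. Theorem \ref{th:Borges} bridges this gap by pinning down the singular locus inside the small sub-plane $\PG(2,q^{j-i})$, so the degree-based Hasse--Weil estimate translates into a genuine lower bound on points off $\cG_3$. The assumption $j\neq 2i$ is precisely what keeps Borges' theorem applicable in this form; the complementary arithmetic-progression case $j=2i$ requires a genuinely different, higher-dimensional analysis, deferred to the next section of the paper.
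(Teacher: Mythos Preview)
Your outline is correct and tracks the paper's argument essentially verbatim: Borges for irreducibility, Hasse--Weil for the lower bound on $\#\mathcal{V}_I(\F_{q^n})$ (respectively $\#\cC'(\F_{q^n})$), Lemma~\ref{le:counting} for the size of the intersection with $\cG_3$, and the $H_d$ factorisation in the non-coprime case with the two alternatives $n>4j+2$ versus $\gcd(n,d)>1$.

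One small gap: in the alternative regime $d':=\gcd(n,d)>1$ you ask for a triple in $\F_{q^{d'}}^3$ that is simultaneously $\F_{q^{d'}}$-dependent and $\F_q$-independent, but when $d'=2$ no three elements of the two-dimensional $\F_q$-space $\F_{q^{d'}}$ can be $\F_q$-independent, so your witness is empty. The easy repair (implicit in the paper's bare assertion $\cG_3(\F_{q^n})\subsetneq\mathcal{H}(\F_{q^n})$) is to drop the requirement that all coordinates lie in $\F_{q^{d'}}$: take $(1,\alpha,\beta)$ with $\alpha\in\F_{q^{d'}}\setminus\F_q$ and any $\beta\in\F_{q^n}\setminus(\F_q+\F_q\alpha)$. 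Since $\alpha\in\F_{q^{d'}}\subseteq\F_{q^d}$, already $1$ and $\alpha$ are $\F_{q^d}$-dependent, so $H_d(1,\alpha,\beta)=0$ for every $\beta$, while $G_3(1,\alpha,\beta)\neq 0$ by the choice of $\beta$. Two further harmless slips: the degree of $\cC'$ is $\ell'=q^j+q^i-q^{2d}-q^d\leq\ell$, not $\ell$; and your stated order $q^{3(j-i)+2}$ for $\#(\PG(2,q^{j-i})\cap\cG_3)$ overshoots the actual $\sim q^{(j-i)+2}$ coming from Lemma~\ref{le:counting}. Neither affects the final comparison.
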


\begin{remark}
	The lower bound on $n$ in Theorem \ref{th:curve_main} holds for all prime power $q$. When $q$ or the gap between $j$ and $i$ is large enough, one may get a slightly better lower bound $n>4j$.
\end{remark}

\section{General Case}\label{sec:general}
In this section, we investigate the general case of Theorem \ref{th:main} and we prove the following. 

\begin{theorem}\label{th:general_main}
	Suppose that $k>3$ and $I=\{0,i_1,i_2,\cdots, i_{k-1}\}$ with $0<i_1<\cdots<i_{k-1}$ is not an arithmetic progression. Assume that one of the following collections of conditions hold.
	\begin{enumerate}[label=(\alph*)]
		\item $i_2\neq 2i_1$;
		\item $i_2=2i_1$ and $q\geq 7$;
		\item $i_2=2i_1$, $q=3,4,5$ and $i_1>1$;
		\item $i_2=2i_1$, $q=2$ with $i_1>2$.
	\end{enumerate}
	For $n>\frac{13}{3}i_{k-1}+\log_q(13\cdot2^{10/3})$, $\mathcal{V}_I$ contains a simple $\mathbb{F}_{q^{n}}$-rational point which is not contained in $\cG_k$ (see \eqref{Eq:F_k} and \eqref{Eq:G_k}), whence $I$ is not a Moore exponent set.
\end{theorem}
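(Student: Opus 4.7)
The overall plan is to establish that $\mathcal{V}_I$ admits a non-repeated absolutely irreducible $\mathbb{F}_q$-rational component $\mathcal{W}$, and then apply Theorem \ref{th:zahid_0} with $G=\mathcal{W}$ and $H=\mathcal{G}_k$ to produce a nonsingular $\mathbb{F}_{q^n}$-rational point of $\mathcal{W}$ lying off $\mathcal{G}_k$. Such a point exhibits a $k$-tuple $A=(\alpha_1,\ldots,\alpha_k)\in\mathbb{F}_{q^n}^k$ whose coordinates are $\mathbb{F}_q$-linearly independent (since $G_k(A)\neq 0$) yet satisfies $\det(M_{A,I})=0$, which certifies that $I$ is not a Moore exponent set.

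To secure $\mathcal{W}$, I would translate a well-chosen $\mathbb{F}_q$-rational point of $\mathcal{V}_I$ to the origin and expand the affine equation $F_I/G_k$ as $F_d+F_{d+1}+\cdots$, where $F_d$ is the tangent cone of minimal degree. The core task is to exhibit a simple $\mathbb{F}_q$-rational point on $F_d=0$, after which Lemma \ref{lemmaGeneral} furnishes an $\mathbb{F}_q$-rational plane $\pi$ for which $\pi\cap\mathcal{V}_I$ has a non-repeated absolutely irreducible $\mathbb{F}_q$-rational plane-curve component; successive applications of Lemma \ref{le:subvarieties} then promote this to a non-repeated absolutely irreducible $\mathbb{F}_q$-rational component $\mathcal{W}$ of the full hypersurface $\mathcal{V}_I$. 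In case (a), where $i_2\neq 2i_1$, the sub-triple $\{0,i_1,i_2\}$ is not an arithmetic progression, so the curve analysis of Section \ref{sec:curve} (ultimately resting on Theorem \ref{th:Borges} and Theorem \ref{th:curve_main}) directly supplies the required simple rational point on the tangent cone. In cases (b)--(d) the triple $\{0,i_1,2i_1\}$ is itself an arithmetic progression, and one must instead exploit a non-arithmetic sub-triple $\{0,i_1,i_j\}\subseteq I$ (which must exist because $I$ is not an AP and $k>3$); the quantitative restrictions $q\geq 7$, or $q\in\{3,4,5\}$ with $i_1>1$, or $q=2$ with $i_1>2$, are exactly what is needed to rule out the degenerate configurations in which Theorem \ref{th:Borges} or the intersection-multiplicity bounds of Lemmas \ref{le:intersection_number_m_m1_coprime} and \ref{le:intersection_number_linear_term} fail to guarantee a non-repeated absolutely irreducible component.

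Once $\mathcal{W}$ is available, the threshold on $n$ comes from chasing the inequality in Theorem \ref{th:zahid_0}. Using $f:=\deg\mathcal{W}\leq\deg(F_I/G_k)\leq \sum_{m=0}^{k-1}q^{i_m}$ and $e:=\deg\mathcal{G}_k=(q^k-1)/(q-1)$, the dominant term in $\beta=5f^{13/3}+f(f+e-1)$ is $5f^{13/3}$ since $13/3>4$. In the bound $q^n>\tfrac14\bigl(\alpha+\sqrt{\alpha^2+4\beta}\bigr)^2$ with $\alpha=(f-1)(f-2)$, the square root dominates $\alpha$ asymptotically, and estimating $f$ in terms of $q^{i_{k-1}}$ yields $q^n> 13\cdot 2^{10/3}\,q^{13 i_{k-1}/3}$ after elementary simplification, which is the stated threshold $n>\tfrac{13}{3}i_{k-1}+\log_q(13\cdot 2^{10/3})$.

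The principal obstacle I expect is the case analysis in (b)--(d): when $i_2=2i_1$, the natural tangent cone built from the first three exponents degenerates, and one has to locate the non-arithmetic structure of $I$ deeper inside and then verify, via careful intersection-multiplicity analysis at the origin, that the relevant absolutely irreducible component appearing in $\pi\cap\mathcal{V}_I$ is genuinely non-repeated. The low-$q$ exceptions are honest: in the excluded configurations the required intersection-number hypotheses of Lemmas \ref{le:intersection_number_m_m1_coprime}--\ref{le:intersection_number_linear_term} simply do not hold, which is why those cases must be cut away from the conclusion.
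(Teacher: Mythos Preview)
Your outline for case (a) and the final numerical step via Theorem~\ref{th:zahid_0} are essentially the paper's argument: the tangent cone of $\mathcal{V}_I$ at the origin is $\mathcal{V}_{I'}$ with $I'=\{0,i_1,\ldots,i_{k-2}\}$, and an induction (base case $k=3$ handled by Theorem~\ref{th:Borges}) together with Lemma~\ref{lemmaGeneral} and Lemma~\ref{le:subvarieties} yields the desired component. Your derivation of the threshold $n>\tfrac{13}{3}i_{k-1}+\log_q(13\cdot 2^{10/3})$ also matches the paper's bookkeeping. One small slip: the component $\mathcal{W}$ is only shown to be $\mathbb{F}_{q^n}$-rational, not $\mathbb{F}_q$-rational; this is all that Theorem~\ref{th:zahid_0} requires (applied over $\mathbb{F}_{q^n}$), so the statement stands, but the stronger claim is neither proved nor needed.

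The genuine gap is in cases (b)--(d). Your plan is to ``exploit a non-arithmetic sub-triple $\{0,i_1,i_j\}$'', but no such triple arises from the tangent-cone reduction: iterating the tangent cone at the origin always peels off the \emph{largest} exponent, so after $k-3$ steps you land precisely on $\{0,i_1,i_2\}=\{0,i_1,2i_1\}$, which is the arithmetic progression you are trying to avoid. There is no evident way to slice or project $\mathcal{V}_I$ so as to isolate the exponents $\{0,i_1,i_j\}$ alone; setting the other $X_m$ to zero kills the determinant, and setting them to generic constants produces a plane curve that still involves \emph{all} exponents of $I$. Consequently the restrictions on $q$ and $i_1$ cannot be explained by Theorem~\ref{th:Borges} applied to a sub-triple.

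What the paper actually does in (b)--(d) is entirely different in spirit (Theorem~\ref{th:case2} and Proposition~\ref{prop:i_2=2i_1}). One fixes generic values $z_1,\ldots,z_{k-3}\in\mathbb{F}_{q^n}$ for $X_3,\ldots,X_{k-1}$ (and $X_k=1$) and studies the resulting plane curve $\mathcal{C}\colon F(X,Y)/G(X,Y)=0$ directly, with \emph{all} exponents present. Proposition~\ref{prop:i_2=2i_1} (using that $I$ is not an AP) guarantees that the auxiliary determinant $M$ does not divide $L$, which controls how many singular points of $\mathcal{C}$ are of the ``bad'' type. One then bounds the total intersection number of any two putative factors of $\mathcal{C}$ at its singular points using Lemmas~\ref{le:intersection_number_m_m1_coprime} and~\ref{le:intersection_number_linear_term}, obtaining an upper bound $B_\tau$; if $\mathcal{C}$ had no $\mathbb{F}_{q^n}$-rational absolutely irreducible component, a suitable splitting $\mathcal{C}=\mathcal{A}\cup\mathcal{B}$ would force $\deg A\cdot\deg B\ge \tfrac{2}{9}d^2$, and the numerical conditions $q\ge 7$, or $q\in\{3,4,5\}$ with $i_1>1$, or $q=2$ with $i_1>2$, are exactly what make $\tfrac{2}{9}d^2>B_\tau$, contradicting B\'ezout. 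This B\'ezout-vs-local-multiplicity comparison is the missing engine in your sketch for (b)--(d); without it the argument does not close.
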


Depending on whether $i_2=2i_1$, we separate the proof of the existence of an $\F_{q^{n}}$-rational absolutely irreducible component of $\mathcal{V}_I$ into two parts.

\begin{theorem}\label{th:case1}
	Let $I=\{0, i_1,\cdots, i_{k}\}$ be a set of positive integers satisfying $i_1<\cdots<i_{k}$. Let $F_I(X_1,\ldots,X_{k},1)$ and $G_{k+1}(X_1,\ldots,X_{k},1)$ be as in \eqref{Eq:F_k} and \eqref{Eq:G_k}.
	
	Suppose that $i_2\neq 2i_1$ and that $n>4i_{k-1}+2$. Then the affine hypersurface $\mathcal{V}_I$ of the affine equation $\frac{F_I(X_1,X_2,\ldots,X_{k},1)}{G_{k+1}(X_1,X_2,\ldots,X_{k},1)}=0$ contains a non-repeated $\mathbb{F}_{q^n}$-rational absolutely irreducible component. 
\end{theorem}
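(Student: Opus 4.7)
The plan is to induct on $|I|$, reducing the hypersurface problem to a simple-point problem for the tangent cone at the origin and then applying Lemma \ref{lemmaGeneral} together with iterated use of Lemma \ref{le:subvarieties}; the base case reduces to the plane curve analysis of Section \ref{sec:curve}. The first step is to identify the initial form at the origin $O=(0,\dots,0)$ of the polynomial $F_I(X_1,\dots,X_k,1)/G_{k+1}(X_1,\dots,X_k,1)$. A Laplace expansion of $F_I(X_1,\dots,X_k,1)$ along its last row (all ones) shows that the minimum-degree contribution in $(X_1,\dots,X_k)$ comes from striking out the column of largest exponent $q^{i_k}$, yielding $\pm F_{I'}(X_1,\dots,X_k)$ for $I'=\{0,i_1,\dots,i_{k-1}\}$, homogeneous of degree $1+q^{i_1}+\dots+q^{i_{k-1}}$. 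The same argument for $G_{k+1}$ gives initial form $\pm G_k(X_1,\dots,X_k)$. Since $G_k$ is, up to a constant, the product of all non-proportional $\F_q$-linear forms in $X_1,\dots,X_k$ (each with multiplicity one) and $F_{I'}$ vanishes on every $\F_q$-linear dependence of the $X_i$, one has $G_k\mid F_{I'}$, so the initial form of $F_I/G_{k+1}$ at $O$ is the homogeneous polynomial $\pm F_{I'}/G_k$.

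The hypothesis $i_2\neq 2i_1$ is inherited verbatim by $I'$. In the base case $|I'|=3$, i.e., $k=3$, the projective variety $F_{I'}/G_3=0$ is a plane curve; by Theorem \ref{th:Borges} together with the analysis following it in Section \ref{sec:curve}, it contains an absolutely irreducible $\F_q$-rational component $\mathcal{C}$ of degree at most $q^{i_2}+q^{i_1}-q^2-q$. Hasse--Weil (Theorem \ref{th:HW-bound}), combined with the count of $\mathcal{C}\cap\cG_3$ carried out in Section \ref{sec:curve}, then guarantees that $\mathcal{C}$ has simple $\F_{q^n}$-rational points outside $\cG_3$ whenever $n>4i_{k-1}+2$. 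For the inductive step $|I'|\geq 4$, the inductive hypothesis supplies a non-repeated absolutely irreducible $\F_{q^n}$-rational component of $\mathcal{V}_{I'}$ (its second-largest exponent is $i_{k-2}\leq i_{k-1}$, so the single bound $n>4i_{k-1}+2$ covers every recursion level), on which Theorem \ref{th:zahid_0} produces a simple $\F_{q^n}$-rational point $P$ lying off $\cG_k$.

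With such a simple $\F_{q^n}$-rational point $P$ of $\{F_{I'}/G_k=0\}$ in hand, I would apply Lemma \ref{lemmaGeneral} (whose proof works verbatim over $\F_{q^n}$) to the hypersurface $\mathcal{S}=\mathcal{V}_I$ with base point $O$ and tangent-cone point $P$: this yields an $\F_{q^n}$-rational plane $\pi$ through the line $OP$ such that $\pi\cap\mathcal{V}_I$ contains a non-repeated absolutely irreducible $\F_{q^n}$-rational component $\mathcal{D}$. Writing $\pi$ as the intersection of $k-2$ hyperplanes of $\PG(k,q^n)$ and iterating Lemma \ref{le:subvarieties} $k-2$ times lifts $\mathcal{D}$ to the required non-repeated absolutely irreducible $\F_{q^n}$-rational component of $\mathcal{V}_I$.

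The main obstacle is the quantitative bookkeeping under the single bound $n>4i_{k-1}+2$: the degrees of the absolutely irreducible components encountered along the recursion grow like $q^{i_{k-1}}$, so one must verify that the hypotheses of Hasse--Weil (base case) and of Theorem \ref{th:zahid_0} (inductive step) are satisfied uniformly in recursion depth, and that the simple point $P$ can always be chosen away from $\cG_k$, so that it genuinely lies on the tangent cone of $F_I/G_{k+1}$ rather than on the vanishing locus of the denominator.
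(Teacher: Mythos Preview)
Your overall architecture---induction on $|I|$, identifying the tangent cone at the origin with $\mathcal{V}_{I'}$, invoking Lemma \ref{lemmaGeneral} at a simple point, and then climbing back up via iterated applications of Lemma \ref{le:subvarieties}---matches the paper's proof exactly. The identification of the initial form via Laplace expansion along the last row is also the same computation the paper does.

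The one substantive discrepancy is your choice of Theorem \ref{th:zahid_0} in the inductive step, driven by the worry that the simple point $P$ must be taken off $\cG_k$. That worry is unnecessary: the tangent cone at $O$ is literally $\{F_{I'}/G_k=0\}=\mathcal{V}_{I'}$, so any simple $\F_{q^n}$-rational point of $\mathcal{V}_{I'}$ already serves as the input to Lemma \ref{lemmaGeneral}, regardless of whether it lies on $\cG_k$. The paper accordingly uses only Theorem \ref{th:zahid_1}, which asks for $q^n>\tfrac{3d^4-4d^3+5d^2}{2}$ with $d\le q^{i_{k-1}}+\cdots$; this is comfortably implied by $n>4i_{k-1}+2$.

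By contrast, Theorem \ref{th:zahid_0} carries the term $5f^{13/3}$ inside $\beta$, so the required bound on $q^n$ scales like $f^{13/3}$ rather than $f^4$. With $f$ of size roughly $q^{i_{k-1}}$, this would force $n>\tfrac{13}{3}\,i_{k-1}+O(1)$, which is \emph{not} implied by $n>4i_{k-1}+2$ once $i_{k-1}$ is moderately large. So the ``quantitative bookkeeping'' you flag as an obstacle would in fact fail with \ref{th:zahid_0}; switching to \ref{th:zahid_1} (and dropping the superfluous avoidance of $\cG_k$) fixes the argument and recovers the stated threshold.
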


\proof
We prove the existence of an $\mathbb{F}_{q^n}$-rational absolutely irreducible component by induction on $k$. 

First let us consider the case $k=3$. Let $d_1=q^{i_2}+q^{i_1}+1$ and $d_2=q^2+q+1$. The homogeneous parts of the smallest degrees of $F_I(X_1,X_2,X_3,1)$ and $G_4(X_1,X_2,X_3,1)$ are 
\[\Phi_{d_1}(X_1,X_2,X_3)=\left|
\begin{array}{lll}
X_1&X_1^{q^{i_1}}&X_1^{q^{i_2}}\\
X_2&X_2^{q^{i_1}}&X_2^{q^{i_2}}\\
X_3&X_3^{q^{i_1}}&X_3^{q^{i_2}}\\
\end{array}
\right| \quad \text{and} \quad \Gamma_{d_2}(X_1,X_2,X_3)=\left|\begin{array}{lll}
X_1&X_1^{q}&X_1^{q^2}\\
X_2&X_2^{q}&X_2^{q^2}\\
X_3&X_3^{q}&X_3^{q^2}\\
\end{array}
\right|,\]
respectively.

Let $d_3=d_2-d_1$ and let $\Psi_{d_3}(X_1,X_2,X_3)$ be the homogeneous part of the smallest degree of the polynomial $H(X_1,X_2,X_3)=\frac{F_I(X_1,X_2,X_3,1)}{G_4(X_1,X_2,X_3,1)}$. Then $\Phi_{d_1}(X_1,X_2,X_3)=\Gamma_{d_2}(X_1,X_2,X_3)\Psi_{d_3}(X_1,X_2,X_3)$, which means that the tangent cone at $O=(0,0,0)$ of $\mathcal{V}_I$ is given by 
$$\Psi_{d_3}(X_1,X_2,X_3)=\frac{\left|
	\begin{array}{lll}
	X_1&X_1^{q^{i_1}}&X_1^{q^{i_2}}\\
	X_2&X_2^{q^{i_1}}&X_2^{q^{i_2}}\\
	X_3&X_3^{q^{i_1}}&X_3^{q^{i_2}}\\
	\end{array}
	\right|}
{\left|\begin{array}{lll}
	X_1&X_1^{q}&X_1^{q^2}\\
	X_2&X_2^{q}&X_2^{q^2}\\
	X_3&X_3^{q}&X_3^{q^2}\\
	\end{array}
	\right|}=\frac{\left|
	\begin{array}{lll}
	X_1&X_1^{q^{i_1}}&X_1^{q^{i_2}}\\
	X_2&X_2^{q^{i_1}}&X_2^{q^{i_2}}\\
	X_3&X_3^{q^{i_1}}&X_3^{q^{i_2}}\\
	\end{array}
	\right|}
{\left|\begin{array}{lll}
	X_1&X_1^{q^d}&X_1^{q^{2d}}\\
	X_2&X_2^{q^d}&X_2^{q^{2d}}\\
	X_3&X_3^{q^d}&X_3^{q^{2d}}\\
	\end{array}
	\right|}\cdot
\frac{\left|
	\begin{array}{lll}
	X_1&X_1^{q^d}&X_1^{q^{2d}}\\
	X_2&X_2^{q^d}&X_2^{q^{2d}}\\
	X_3&X_3^{q^d}&X_3^{q^{2d}}\\
	\end{array}
	\right|}
{\left|\begin{array}{lll}
	X_1&X_1^{q}&X_1^{q^2}\\
	X_2&X_2^{q}&X_2^{q^2}\\
	X_3&X_3^{q}&X_3^{q^2}\\
	\end{array}
	\right|},$$
where $d=\gcd(i_1,i_2)$ and we denote the first component by $C(X_1,X_2,X_3)$.

By Theorem \ref{th:Borges}, the curve defined by $C(X_1,X_2,X_3)=0$ is absolutely irreducible and the set of its singular points is either $PG(2,q^{i_2-i_1})$ or $PG(2,q^{i_2-i_1})\setminus PG(2,q^d)$, in which the latter case happens if and only if $i_1=d$. By the Hasse-Weil theorem, the number of its $\F_{q^n}$-rational simple points is at least 
$$q^n+1- (\ell-1)(\ell-2)\sqrt{q^n}-q^{2(i_2-i_1)}-q^{i_2-i_1}-1,$$
where $\ell=q^{i_2}+q^{i_1}-q^{2d}-q^d$.

When $n> 4i_2+2$, which always holds provided that $n>4i_{k-1}+2$, it is straightforward to check that there is at least one simple point on the curve defined by $C(X_1,X_2,X_3)=0$.

Hence, by the existence of a simple $\F_{q^n}$-rational point $P$ in the curve $C(X_1,X_2,X_3)$, one can show, by Lemma \ref{lemmaGeneral}, that there exists an $\mathbb{F}_{q^n}$-rational plane $\pi$ through the origin and $P$ such that $\pi\cap \mathcal{V}_I$ has a non-repeated absolutely irreducible component of $\pi\cap \mathcal{V}_I$. By Lemma \ref{le:subvarieties} there is a  non-repeated $\mathbb{F}_{q^n}$-rational component $\mathcal{W}_I$ in $\mathcal{V}_I$ which is absolutely irreducible.

Suppose now that for each $I=\{0, i_1,\cdots, i_{j}\}$, $j\leq s-1$, the hypersurface $\mathcal{V}_I$ contains a  non-repeated $\mathbb{F}_{q^n}$-rational absolutely irreducible component. Let us prove the case $I=\{0, i_1,\cdots, i_{s}\}$. The tangent cone at the origin of $\mathcal{V}_I$ is the projective closure of the affine variety $\mathcal{V}_{I^\prime}$, where $I^{\prime}=\{0, i_1,\cdots, i_{s-1}\}$, which by induction contains a non-repeated absolutely irreducible $\mathbb{F}_{q^n}$-rational component $\mathcal{W}_{I^\prime}$.

Note that the degree of $\mathcal{V}_{I^{\prime}}$ is $d=q^{i_{s-1}}+q^{i_{s-2}}+\cdots q^{i_{1}}-(q^{s-1}+q^{s-2}+\cdots q)$. By assumption $n>4i_{k-1}+2$, which implies $q^n> \frac{3}{2}d^4> \frac{3d^4-4d^3+5d^2}{2}$. By Theorem \ref{th:zahid_1}, the component $\mathcal{W}_{I^\prime}$, which has degree at most $d$, has a simple $\mathbb{F}_{q^n}$-rational point. This shows, by Lemma \ref{lemmaGeneral}, that there exists an $\mathbb{F}_{q^n}$-rational plane $\pi$ through the origin and $P$ such that $\pi\cap \mathcal{V}_I$ contains a  non-repeated absolutely irreducible component defined over $\mathbb{F}_{q^n}$. 

Take an arbitrary $3$-dimensional subspace $S$ containing $\pi$. The variety $S\cap \mathcal{V}_I$ has $\pi\cap \mathcal{V}_I$ as a hyperplane section. By Lemma \ref{le:subvarieties}, $S\cap \mathcal{V}_I$ possesses a  non-repeated absolutely irreducible $\mathbb{F}_{q^n}$-rational component. Using this inductive argument, it is readily seen that there is a  non-repeated absolutely irreducible $\mathbb{F}_{q^n}$-rational component in $\mathcal{V}_I$.
\endproof

Next let us turn to the case $i_2=2i_1$. We define 
\begin{equation}\label{Eq:L}
L(U,Z_1,\ldots,Z_{k-3})=\left|
\begin{array}{llll}
U&U^{q^{i_2}}&\cdots&U^{q^{i_{k-1}}}\\
Z_1&Z_1^{q^{i_2}}&\cdots&Z_1^{q^{i_{k-1}}}\\
\vdots&\vdots&&\vdots\\
Z_{k-3}&Z_{k-3}^{q^{i_2}}&\cdots&Z_{k-3}^{q^{i_{k-1}}}\\
1&1&\cdots&1\\
\end{array}
\right|,
\end{equation}
\begin{equation}\label{Eq:M}
M(U,Z_1,\ldots,Z_{k-3})=\left|
\begin{array}{llll}
U&U^{q^{i_2-i_1}}&\cdots&U^{q^{i_{k-1}-i_1}}\\
Z_1&Z_1^{q^{i_2-i_1}}&\cdots&Z_1^{q^{i_{k-1}-i_1}}\\
\vdots&\vdots&&\vdots\\
Z_{k-3}&Z_{k-3}^{q^{i_2-i_1}}&\cdots&Z_{k-3}^{q^{i_{k-1}-i_1}}\\
1&1&\cdots&1\\
\end{array}
\right|,
\end{equation}

\begin{equation}\label{Eq:N}
N(Z_1,\ldots,Z_{k-3})=\left|
\begin{array}{lll}
Z_1^{q^{i_2-i_1}}&\cdots&Z_1^{q^{i_{k-1}-i_1}}\\
\vdots&&\vdots\\
Z_{k-3}^{q^{i_2-i_1}}&\cdots&Z_{k-3}^{q^{i_{k-1}-i_1}}\\
1&\cdots&1\\
\end{array}\right|,
\end{equation}

\begin{equation}\label{Eq:R}
R(Z_2,\ldots,Z_{k-3})=\left|
\begin{array}{lll}
Z_2^{q^{i_3-i_1}}&\cdots&Z_2^{q^{i_{k-1}-i_1}}\\
\vdots&&\vdots\\
Z_{k-3}^{q^{i_3-i_1}}&\cdots&Z_{k-3}^{q^{i_{k-1}-i_1}}\\
1&\cdots&1\\
\end{array}
\right|.
\end{equation}

\begin{proposition}\label{prop:i_2=2i_1}
	Suppose $k>3$ and $i_2=2i_1$. If $M(U,Z_1,\ldots,Z_{k-3})$  divides $L(U,Z_1,\ldots,Z_{k-3})$ then $i_1\mid i_j$ for each $j=1,\ldots,k-1$.
\end{proposition}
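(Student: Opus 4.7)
I propose to prove Proposition~\ref{prop:i_2=2i_1} by contrapositive and induction on $k$, setting $e:=i_1$. The divisibilities $e\mid i_1$ and $e\mid i_2=2e$ are automatic, so the content is $e\mid i_j$ for $3\le j\le k-1$. As a preliminary, subtracting the $0$-th column from every other column and expanding along the row of $1$'s in the matrices of $M$ and $L$ yields the $(k-2)\times(k-2)$ presentations
\begin{align*}
L &= \det\bigl(X_i^{q^{i_r}}-X_i\bigr)_{0\le i\le k-3,\,2\le r\le k-1},\\
M &= \det\bigl(X_i^{q^{i_r-e}}-X_i\bigr)_{0\le i\le k-3,\,2\le r\le k-1},
\end{align*}
with $X_0=U$ and $X_i=Z_i$ for $i\ge 1$. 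The Frobenius identity $X^{q^{i_r}}-X=(X^{q^{i_r-e}}-X)^{q^e}+(X^{q^e}-X)$ then displays the $L$-matrix as the $M^{q^e}$-matrix plus the rank-one update $\mathbf V\mathbf 1^T$ with $V_i:=X_i^{q^e}-X_i$, and the matrix-determinant lemma gives
\[L=M^{q^e}+\sum_{i=0}^{k-3}V_i\,C_i,\]
where each $C_i$ is a $(k-2)\times(k-2)$ Moore-like minor. Since $M\mid M^{q^e}$, $M\mid L$ forces $M\mid\sum_iV_iC_i$.

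\textbf{Base case $k=4$.} Assume $e\nmid i_3$ and specialize $(U,Z_1)\in\mathbb F_{q^{i_3-e}}^{2}$. On this subfield $X^{q^{i_3-e}}=X$, so the column of $M$ with exponent $i_3-e$ collapses into the $0$-th column and $M\equiv 0$. Reducing the column exponents of $L$ modulo $i_3-e$ turns $L$ into
\[\det\begin{pmatrix}U^{q^a}-U & U^{q^e}-U\\ Z_1^{q^a}-Z_1 & Z_1^{q^e}-Z_1\end{pmatrix},\qquad a:=2e\bmod(i_3-e).\]
Since $i_3>2e$ gives $i_3-e>e$ we have $a\ne e$; and $a=0$ would force $i_3=3e$, contradicting $e\nmid i_3$. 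Thus $a,e$ are distinct and positive, and the displayed $q$-linearized polynomial (of $q$-degree $<i_3-e$) does not vanish identically on $\mathbb F_{q^{i_3-e}}^{2}$, yielding a point where $M=0\ne L$ and contradicting $M\mid L$.

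\textbf{Inductive step $k\ge 5$.} Regarding $L$ and $M$ as polynomials in $Z_{k-3}$, the top $q$-degree coefficients are, up to sign, $L'$ and $M'$ corresponding to the truncated set $I'=\{0,e,2e,i_3,\ldots,i_{k-2}\}$, and $M\mid L$ transfers to $M'\mid L'$; the inductive hypothesis gives $e\mid i_j$ for $j=1,\ldots,k-2$. To conclude $e\mid i_{k-1}$, specialize $(X_0,\ldots,X_{k-3})\in\mathbb F_{q^{i_{k-1}-e}}^{k-2}$: the column of $M$ of exponent $i_{k-1}-e$ collapses to the $0$-th column, so $M\equiv 0$. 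Reducing the column exponents of $L$ modulo $i_{k-1}-e$ leaves the exponents $i_r<i_{k-1}-e$ untouched, sends $i_r\ge i_{k-1}-e$ to $\tilde i_r:=e-(i_{k-1}-i_r)\in[0,e)$, and sends $i_{k-1}$ itself to $e$. If some $\tilde i_r=0$, i.e.\ $i_{k-1}-i_r=e$, then $e\mid i_{k-1}$ at once (since $e\mid i_r$). Otherwise, each wrapped $\tilde i_r$ lies in $(0,e)$ while the unwrapped reductions are multiples of $e$ (by the induction) belonging to $\{0\}\cup[2e,i_{k-2}]$, and the reduced value $e$ (from $i_{k-1}$) is distinct from both families; so the $k-1$ reduced exponents are pairwise distinct. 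The resulting restricted $L$ is then a Moore-like determinant with distinct $q$-exponents, hence nonzero as a polynomial (different permutations in its expansion give different monomials); moreover, each $X_i$-degree is $<q^{i_{k-1}-e}$, so via the identification of functions with reduced polynomials on $\mathbb F_{q^{i_{k-1}-e}}^{k-2}$ it is not identically zero as a function. This produces a witness contradicting $M\mid L$, so $e\mid i_{k-1}$ must hold.

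\textbf{Principal difficulty.} The delicate aspect is the modular-reduction bookkeeping in the inductive step: the inductive assumption $e\mid i_r$ for $r\le k-2$ is precisely what confines wrapped residues to $(0,e)$ and prevents collisions with the multiples-of-$e$ residues, which would otherwise allow $L$ to vanish identically on the specialization. The boundary case $\tilde i_r=0$ (i.e.\ $i_{k-1}=i_r+e$) is where the specialization degenerates, but the conclusion $e\mid i_{k-1}$ already holds there; so the argument closes cleanly provided this exponent-tracking is carried out carefully.
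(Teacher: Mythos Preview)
Your argument is correct and genuinely different from the paper's. The paper works locally: after a generic specialization of $z_2,\ldots,z_{k-3}$ it studies the plane curve $M(U,V,z_2,\ldots,z_{k-3})=0$ at the origin, parametrizes a branch as $(t,\lambda t+\mu t^{\alpha}+\cdots)$ with $\lambda\in\mathbb F_{q^{i_1}}$ and $\alpha>1$, and substitutes into $L$; comparing lowest-order terms forces $\alpha+q^{2i_1}=\alpha q^{2i_1}+1$, a contradiction. Your approach is global and inductive: the leading-coefficient-in-$Z_{k-3}$ trick transfers $M\mid L$ to the truncated pair $M'\mid L'$, so by induction $e\mid i_j$ for $j\le k-2$; then specialization to $\mathbb F_{q^{i_{k-1}-e}}^{\,k-2}$ kills $M$ identically while, thanks to the inductive divisibilities, the reduced column exponents of $L$ stay pairwise distinct (or else $i_{k-1}-e=i_r$ and you are done), so $L$ does not vanish identically there. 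Two small remarks: the matrix-determinant-lemma identity $L=M^{q^{e}}+\sum_iV_iC_i$ in your plan is never used in the actual proof and can be dropped; and the sentence ``different permutations in its expansion give different monomials'' is a bit loose for a determinant with a row of $1$'s --- the cleanest justification is to look at the leading coefficient in $X_0$ and induct on the size, which yields exactly the non-vanishing you need. The trade-off is that the paper's branch argument handles all $k$ at once with a single local computation but relies on the (nontrivial) theory of branches of plane curves, whereas your subfield-specialization argument is entirely elementary but requires the exponent bookkeeping you flag as the principal difficulty.
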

\proof
Suppose, by way of contraction, that not all the $i_j$ are divisible by $i_1$.

Clearly, if $M(U,Z_1,\ldots,Z_{k-3})$ divides $L(U,Z_1,\ldots,Z_{k-3})$,  then in particular $M(U,V,z_2,\ldots,z_{k-3})$ also divides $L(U,V,z_2,\ldots,z_{k-3})$ for all $(z_2,\ldots,z_{k-3})$. Choose $(z_2,\ldots,z_{k-3})\in \overline{\mathbb{F}_q}^{k-3}$ such that none of the lower $(k-3)\times (k-3)$\ minors in the determinant  $M(U,V,z_2,\ldots,z_{k-3})$ vanishes. 

The tangent cone at the origin of the curve $\mathcal{D}$ defined by the affine equation $M(U,V,z_2,\ldots,z_{k-3})=0$ is $R(z_2,\ldots,z_{k-3}) (UV^{q^{i_1}}-VU^{q^{i_1}})$. Now the origin is an ordinary $(q^{i_1}+1)$-fold singular point of $\mathcal{D}$ (since the polynomial $UV^{q^{i_1}}-VU^{q^{i_1}}$ factorizes in non-repeated linear factors over ${\mathbb{F}}_{q^{i_1}}$). Therefore there are exactly $q^{i_1}+1$ branches centered at the origin and they correspond to the elements of $\mathbb{F}_{q^{i_1}}\cup \{\infty\}$.

Given $\lambda\in \mathbb{F}_{q^{i_1}}\cup \{\infty\}$, let  $\gamma_{\lambda}$ denote the corresponding branch of the curve $\mathcal D$ centered at the origin. Since $i_1$ does not divide all the $i_j$ the line $V=\lambda U$ is not a component of  $M(U,V, z_2, \cdots, z_{k-3})=0$ and  the branch $\gamma_{\lambda}$ is of  the type $(t,\lambda t+\mu t^{\alpha}+\cdots)$ for some nonzero $\mu\in \overline{\mathbb{F}_q}$ and $\alpha>1$.  So $\gamma_{\lambda}$ belongs to the curve $L(U,V,z_2,\ldots,z_{k-3})=0$ too. Recall that 
\begin{align*}
&L(t,\lambda t +\mu t^{\alpha}+\cdots,z_{2},\ldots,z_{k-3})\\
=&\left|
\begin{array}{llll}
t&t^{q^{2i_1}}&\cdots&t^{q^{i_{k-1}}}\\
\lambda t +\mu t^{\alpha}+\cdots&\lambda t^{q^{2i_1}} +\mu^{q^{2i_1}} t^{\alpha{q^{2i_1}}}+\cdots&\cdots&\lambda^{q^{i_{k-1}}} t^{q^{i_{k-1}}} +\mu^{q^{i_{k-1}}} t^{\alpha{q^{i_{k-1}}}}+\cdots\\
\vdots&\vdots&&\vdots\\
z_{k-3}&z_{k-3}^{q^{2i_1}}&\cdots&z_{k-3}^{q^{i_{k-1}}}\\
1&1&\cdots&1\\
\end{array}
\right|,
\end{align*}
we must have that the above power series in $t$ vanishes. We may subtract the second row by the first row times $\lambda$. By checking the term of the smallest degree, we must have $\alpha+q^{2i_1}=\alpha q^{2i_1}+1$ which yields $\alpha=1$ or $q^{2i_1}=1$, a contradiction.
\endproof

\begin{theorem}\label{th:case2}
Let $k$ be an integer larger than $3$ and
$$F(X,Y)=
\left|
\begin{array}{lllll}
X&X^{q^{i_1}}&X^{q^{i_2}}&\cdots&X^{q^{i_{k-1}}}\\
Y&Y^{q^{i_1}}&Y^{q^{i_2}}&\cdots&Y^{q^{i_{k-1}}}\\
z_1&z_1^{q^{i_1}}&z_1^{q^{i_2}}&\cdots&z_1^{q^{i_{k-1}}}\\
\vdots&\vdots&\vdots&&\vdots\\
z_{k-3}&z_{k-3}^{q^{i_1}}&z_{k-3}^{q^{i_2}}&\cdots&z_{k-3}^{q^{i_{k-1}}}\\
1&1&1&\cdots&1\\
\end{array}
\right|,$$
and
$$G(X,Y)=
\left|
\begin{array}{lllll}
X&X^q&X^{q^{2}}&\cdots&X^{q^{k-1}}\\
Y&Y^q&Y^{q^{2}}&\cdots&Y^{q^{k-1}}\\
z_1&z_1^q&z_1^{q^{2}}&\cdots&z_1^{q^{k-1}}\\
\vdots&\vdots&\vdots&&\vdots\\
z_{k-3}&z_{k-3}^q&z_{k-3}^{q^{2}}&\cdots&z_{k-3}^{q^{k-1}}\\
1&1&1&\cdots&1\\
\end{array}
\right|,$$
Assume that there exists $(z_1,\ldots,z_{k-3})\in \mathbb{F}_{q^n}^{k-3}$ such that $N(z_1,\ldots,z_{k-3})\neq0$  and  $M(U,z_1,\ldots,z_{k-3})$  does not divide $L(U,z_1,\ldots,z_{k-3})$. If one of the following conditions is satisfied,
\begin{itemize}
	\item $q\geq 7$,
	\item $q=3,4,5$ and $i_1>1$,
	\item $q=2$ and $i_1>2$,
\end{itemize}
then the curve $\mathcal{C}$ of the affine equation $\frac{F(X,Y)}{G(X,Y)}=0$ contains a  non-repeated $\mathbb{F}_{q^n}$-rational absolutely irreducible component.  
\end{theorem}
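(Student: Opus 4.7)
The curve $\mathcal{C}: H(X,Y):=F(X,Y)/G(X,Y)=0$ has defining polynomial $H\in\mathbb{F}_{q^n}[X,Y]$, so it is enough to prove that $H$ is absolutely irreducible; then $\mathcal{C}$ itself is the required non-repeated $\mathbb{F}_{q^n}$-rational absolutely irreducible component. My first step is to analyze the tangent cone of $\mathcal{C}$ at the origin $O$. Laplace-expanding $F$ along the first two rows and picking out the term of lowest degree, one sees that the tangent cone of $F$ at $O$ equals $\pm N(z_1,\ldots,z_{k-3})^{q^{i_1}}\bigl(XY^{q^{i_1}}-X^{q^{i_1}}Y\bigr)$, which is nonzero by the hypothesis $N(z_1,\ldots,z_{k-3})\neq 0$; an analogous Laplace expansion gives the tangent cone of $G$ at $O$ as a nonzero scalar multiple of $XY^{q}-X^{q}Y$. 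Dividing, $O$ is an ordinary $(q^{i_1}-q)$-fold singularity of $\mathcal{C}$ whose distinct tangent lines are $\{Y=\mu X : \mu\in\mathbb{F}_{q^{i_1}}^{*}\setminus\mathbb{F}_{q}^{*}\}$.

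\textbf{Branch refinement from $M\nmid L$.} Each tangent direction $\mu$ supports a unique branch $\gamma_\mu$ of $\mathcal{C}$ centered at $O$, parametrized as $(t,\mu t+\rho_\mu t^{\alpha_\mu}+\cdots)$ with $\rho_\mu\neq 0$. Substituting $\gamma_\mu$ into $F$ and comparing the two lowest-order terms of the resulting power series in $t$, exactly as in the proof of Proposition \ref{prop:i_2=2i_1} (and using $i_2=2i_1$), yields an identity on $\alpha_\mu$ and $\rho_\mu$ which forces $\alpha_\mu=1$ for every $\mu$ precisely when $M(U,z_1,\ldots,z_{k-3})\mid L(U,z_1,\ldots,z_{k-3})$. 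By our hypothesis this divisibility fails, so $\alpha_\mu>1$ for at least one $\mu$, and a routine computation of the next lowest-order coefficient gives an explicit lower bound on $\alpha_\mu-1$ controlled by $q^{i_1}$ and the differences $i_j-i_1$.

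\textbf{Bézout obstruction.} Suppose, for contradiction, that $H=AB$ is a factorization over $\overline{\mathbb{F}_q}$ with $\deg A,\deg B\geq 1$. Since $O$ is an ordinary singularity of $\mathcal{C}$, the tangent directions partition into $T_A\sqcup T_B$ with $m_O(A)=|T_A|$, $m_O(B)=|T_B|$, $|T_A|+|T_B|=q^{i_1}-q$, and $A,B$ share no tangent at $O$, so Lemma \ref{le:intersection_number_m_m1_coprime} gives $I(O,A\cap B)=|T_A|\cdot|T_B|$. Combining the branch refinement of the previous step with Lemma \ref{le:intersection_number_linear_term} upgrades this to
\[
I(O,A\cap B)\geq |T_A|\cdot|T_B|+\delta,
\]
for some $\delta>0$ depending on the $\alpha_\mu$'s. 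Bézout's theorem $(\deg A)(\deg B)=\sum_P I(P,A\cap B)\geq I(O,A\cap B)$, together with the constraints $|T_A|+|T_B|=q^{i_1}-q$ and $\deg A+\deg B=\deg H=q^{i_{k-1}}+q^{i_{k-2}}-q^{k-1}-q^{k-2}$, then produces a contradiction in each of the ranges (b)--(d) of $q$ listed in the theorem; hence $H$ is absolutely irreducible over $\mathbb{F}_{q^n}$ and the conclusion follows.

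\textbf{Main obstacle.} The technical heart is converting the purely algebraic hypothesis $M\nmid L$ into a quantitative refinement $\delta>0$ of $I(O,A\cap B)$, and then verifying that the resulting Bézout inequality fails for every admissible splitting $|T_A|+|T_B|=q^{i_1}-q$. The case split (b)--(d) on $q$ precisely marks the thresholds at which the naive estimate $|T_A|\cdot|T_B|$ becomes too weak; the supplementary hypotheses $i_1>1$ (for $q\in\{3,4,5\}$) and $i_1>2$ (for $q=2$) are needed to keep $q^{i_1}-q$ large enough compared to $\deg H$ for the Bézout inequality to close. Checking the small-$q$ corners carefully, and ensuring that the refinement $\delta$ is large enough in each of them, is where most of the real work sits.
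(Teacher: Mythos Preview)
Your B\'ezout argument is set up in the wrong direction, and this is a genuine gap rather than a detail. From B\'ezout you correctly have $(\deg A)(\deg B)=\sum_P I(P,\mathcal{A}\cap\mathcal{B})\geq I(O,\mathcal{A}\cap\mathcal{B})$, and you then produce a \emph{lower} bound $I(O,\mathcal{A}\cap\mathcal{B})\geq |T_A|\cdot|T_B|+\delta$. These two inequalities are compatible, not contradictory: since $|T_A|+|T_B|=q^{i_1}-q$ while $\deg A+\deg B=d\sim q^{i_{k-1}}$, the product $(\deg A)(\deg B)$ can be of order $d^2/4\sim q^{2i_{k-1}}$, dwarfing $|T_A|\cdot|T_B|+\delta\leq (q^{i_1}-q)^2/4+\delta$. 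No choice of $\delta$ coming from a single branch refinement at $O$ can close that gap. In short, concentrating at the origin and bounding from below cannot force a contradiction with B\'ezout; you must bound the \emph{total} intersection number from \emph{above}.

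This is exactly what the paper does, and it requires ingredients your plan omits. First, one locates \emph{all} singular points of $\mathcal{C}$: the affine ones are the $(\alpha,\beta)$ with $M(\alpha,z_1,\ldots,z_{k-3})=M(\beta,z_1,\ldots,z_{k-3})=0$, and there are also points at infinity. At a generic singular point the local expansion of $F(X+\alpha,Y+\beta)$ has lowest term $L(\alpha,\ldots)Y^{q^{i_1}}-L(\beta,\ldots)X^{q^{i_1}}$, which is a $q^{i_1}$-th power of a linear form, and Lemmas~\ref{le:intersection_number_m_m1_coprime}--\ref{le:intersection_number_linear_term} then cap $I(P,\mathcal{A}\cap\mathcal{B})$ by $q^{i_1}$. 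The hypothesis $M\nmid L$ is used not for a branch refinement at $O$, but to bound the number of ``bad'' singular points (those with $L(\alpha,\ldots)=L(\beta,\ldots)=0$), where one only has the cruder cap $(q^{i_1}+1)^2/4$. Summing yields an explicit upper bound $\tau$ for $\sum_P I(P,\mathcal{A}\cap\mathcal{B})$. On the other side, one does \emph{not} try to show $H$ is absolutely irreducible; one factors $H=W_1\cdots W_r$ over $\mathbb{F}_{q^n}$, assumes each $W_\ell$ is absolutely reducible, and uses Lemma~\ref{le:splitting_of_irreducible_polys} to split the absolutely irreducible pieces into two groups $A,B$ with $(\deg A)(\deg B)\geq \tfrac{2}{9}d^2$. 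The numerical conditions on $q$ and $i_1$ are precisely what is needed for $\tfrac{2}{9}d^2>\tau$, giving the contradiction. Your use of the lemmas and of the $M\nmid L$ hypothesis is thus inverted relative to what the argument requires.
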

\proof
We want to study the intersection multiplicity of two putative components $\mathcal{A}$ and $\mathcal{B}$ of $\mathcal{C}$ at its singular points.

By direct computation, affine singular points of $F(X,Y)=0$ satisfy

\begin{eqnarray*}
\frac{\partial F(X,Y)}{\partial X}&=& (M(Y,z_1,\ldots,z_{k-3}))^{q^{i_1}},\\  
\frac{\partial F(X,Y)}{\partial Y}&=& (M(X,z_1,\ldots,z_{k-3}))^{q^{i_1}},\\
F(X,Y)&=&0.\\
\end{eqnarray*}

Consider now a singular point $(\alpha,\beta)$ of $\mathcal{C}$. Then 
\[M(\alpha,z_1,\ldots,z_{k-3})=M(\beta,z_1,\ldots,z_{k-3})=0.\] 
Expanding $F(X+\alpha,Y+\beta)$, one can see that the terms of the smallest degree appearing in it are
$$L(\alpha,z_1,\ldots,z_{k-3})Y^{q^{i_1}}-L(\beta,z_1,\ldots,z_{k-3})X^{q^{i_1}}+(N(z_1,\ldots,z_{k-3}))^{q^{i_1}}(XY^{q^{i_1}}-YX^{q^{i_1}})+\cdots,$$
where $L$ is as in \eqref{Eq:L}.

\begin{itemize}
	\item If one between $\alpha$ and $\beta$ does not satisfy $L(U,z_1,\ldots,z_{k-3})=0$ then $P$ has multiplicity $q^{i_1}$. Since \[XY^{q^{i_1}}-YX^{q^{i_1}}=XY\prod_{\omega\in \F_{q^{i_1}}^*}(Y-\omega X)\] 
	and \[L(\alpha,z_1,\ldots,z_{k-3})Y^{q^{i_1}}-L(\beta,z_1,\ldots,z_{k-3})X^{q^{i_1}}=(c_\alpha Y-c_\beta X)^{q^{i_1}}\] 
	with $c_\alpha^{q^{i_1}}=L(\alpha,z_1,\ldots,z_{k-3})$ and $c_\beta^{q^{i_1}}=L(\beta,z_1,\ldots,z_{k-3})$, the intersection multiplicity of the putative components $\mathcal{A}$ and $\mathcal{B}$ at $P$ is either $0$ or $q^{i_1}$, by Lemmas  \ref{le:intersection_number_m_m1_coprime} and \ref{le:intersection_number_linear_term}.
    \item If $L(\alpha,z_1,\ldots,z_{k-3})=L(\beta,z_1,\ldots,z_{k-3})=0$ then the intersection multiplicity of the two components at $P$ is at most $(q^{i_1}+1)^2/4$. Since by assumption $M(U,z_1,\ldots,z_{k-3})$  does not divide $L(U,z_1,\ldots,z_{k-3})$, the number of points of the second type is  at most $q^{2(\deg(M)-1)}=q^{2(i_{k-1}-i_1-1)}$.
    \item Consider now an ideal point $P=(\alpha,\beta,0)$. Such a point is equivalent (up to a change of variables) to an affine singular point  of the the curve $\mathcal{C}$. So we can suppose that the intersection multiplicity of the two components $\mathcal{A}$ and $\mathcal{B}$ at $P$ is at most $(q^{i_1}+1)^2/4$. The total number of ideal points is at most $(q^{i_{k-1}-i_{k-2}}+1)$, since the term of the highest degree in $F(X,Y)$ is  $\left(XY^{q^{i_{k-1}-i_{k-2}}}-X^{q^{i_{k-1}-i_{k-2}}}Y\right)^{q^{i_{k-2}}}$.
\end{itemize}

The largest possible value for the sum of the multiplicities of intersection of two components $\mathcal{A}$ and $\mathcal{B}$ of $\mathcal{C}$ is 

\begin{eqnarray}\label{Eq:MultiplicitiesGen}
\tau&=&\underbrace{\left(q^{2(i_{k-1}-i_1)}-q^{2(i_{k-1}-i_1-1)}\right)}_{\text{Affine  Type I}}\cdot q^{i_1}+\underbrace{q^{2(i_{k-1}-i_1-1)}}_{\text{Affine Type II}}\cdot (q^{i_1}+1)^2/4\nonumber\\
&&+\underbrace{(q^{i_{k-1}-i_{k-2}}+1)}_{\text{Infinity}}\cdot (q^{i_1}+1)^2/4.
\end{eqnarray}
From \eqref{Eq:MultiplicitiesGen}, we can derive an upper bound on $\tau$, denoted by $B_\tau$,
\begin{eqnarray}\label{Eq:upperbound_tau}
	\tau&\leq& q^{2i_{k-1}} \left(\frac{1}{q^{i_1}}\left( 1-\frac{1}{q^2} \right) + \frac{(q^{i_1}+1)^2}{4q^{2i_1+2}} + \frac{q^{i_1}+4}{4q^{i_{k-1}+i_{k-2}-i_1}}\right)+\frac{(q^{i_1}+1)^2}{4}\nonumber\\
	& \leq& q^{2i_{k-1}} \left(\frac{1}{q^{i_1}}\left( 1-\frac{1}{q^2} \right) + \frac{(q^{i_1}+1)^2}{4q^{2i_1+2}} + \frac{1}{4q^{2k-5}} +\frac{1}{q^{2k-4}} \right)+\frac{(q^{i_1}+1)^2}{4},
\end{eqnarray}
because
\begin{eqnarray*}
	    \frac{ (q^{i_{k-1}-i_{k-2}}+1)(q^{i_1}+1)^2}{4}&=&\frac{q^{i_{k-1}-i_{k-2}+i_1}}{4}\left( q^{i_1} + 2+ \frac{1}{q^{i_1}}\right) + \frac{(q^{i_1}+1)^2}{4}\\
	&\leq& \frac{(q^{i_1}+4)q^{2i_{k-1}}}{4q^{i_{k-1}+i_{k-2}-i_1}} + \frac{(q^{i_1}+1)^2}{4}.
\end{eqnarray*}

Assume that
\[F(X,Y)/G(X,Y)=W_1(X,Y)W_{2}(X,Y)\cdots W_{r}(X,Y)\]
is the decomposition over $\F_{q^n}$ with $\deg W_\ell=d_\ell$. It is clear that the degree of $F(X,Y)/G(X,Y)$ is
$$d=\sum_{k=1}^{r} d_\ell=q^{i_{k-1}}+q^{i_{k-2}}-q^{k-1}-q^{k-2}.$$ 
Since we have already shown that for any two components $\mathcal{A}$ and $\mathcal{B}$ their total intersection number has \eqref{Eq:MultiplicitiesGen} as upper bound,  $W_{\ell_1}$ and $W_{\ell_2}$ must be relatively prime for any distinct $\ell_1$ and $\ell_2$.

By Lemma \ref{le:splitting_of_irreducible_polys}, there exist natural numbers $s_\ell$ such that $W_{\ell}$ splits into $s_\ell$ absolutely irreducible factors over $\overline{\F_{q^n}}$, each of degree $d_\ell/s_\ell$. Assume, by way of contradiction, that $\mathcal{C}$ has no absolutely irreducible component over $\F_{q^n}$, i.e.\ $s_\ell>1$ for $\ell=1,2,\dots, r$. Define two polynomials $A(X,Y,T)$ and $B(X,Y,T)$ by
\[A(X,Y,T)=\prod_{\ell=1}^r\prod _{j=1}^{\lfloor s_\ell/2 \rfloor }  Z_{\ell,j} (X,Y,T), \qquad B(X,Y,T)=\prod_{\ell=1}^{r}\prod _{j=\lfloor s_\ell/2 \rfloor+1}^{s_\ell}  Z_{\ell,j} (X,Y,T),\]
where $Z_{\ell,1}(X,Y,T), \ldots, Z_{\ell,s_\ell}(X,Y,T)$ are the absolutely irreducible components of $W_{\ell}(X,Y,T)$. Let $\alpha$ and $\alpha+\beta$ be the degrees of $A(X,Y,T)$ and $B(X,Y,T)$ respectively. Then 
$$2\alpha+\beta=d, \qquad \beta\leq \alpha, \qquad \beta \leq \frac{d}{3}.$$
Let $\mathcal{A}$ and $\mathcal{B}$ be the curves defined by $A(X,Y,T)$ and $B(X,Y,T)$, respectively. It is clear that
\begin{equation*}
(\deg A)( \deg B)=(\alpha+\beta)\alpha = \frac{d^2-\beta^2}{4} \ge \frac{2}{9}d^2.
\end{equation*}
By looking at the value of $d$, we have
{\small
\begin{equation}\label{Eq:degree}
		 \frac{2}{9}d^2=\frac{2}{9} \left(\left(q^{{i_{k-1}}}-q^{k-1}\right)^2+2(q^{{i_{k-1}}}-q^{k-1})(q^{i_{k-2}}-q^{k-2})+(q^{i_{k-2}}-q^{k-2})^2  \right).
\end{equation}}

By comparing the upper bound $B_\tau$ for $\tau$ in \eqref{Eq:upperbound_tau} and the value of $\frac{2}{9}d^2$ in \eqref{Eq:degree}, we see that if $\frac{2}{9}d^2-B_\tau>0$,
then $\tau<(\deg A)(\deg B)$ which is  a contradiction to B\'ezout's theorem; see Theorem \ref{th:bezout}. As $i_{k-1}\geq i_1+k-1$ and $i_{k-2}\geq i_1+k-2$, we have $q^{i_{k-1}}\geq q^{i_1+k-1}$, $q^{i_{k-2}}\geq q^{i_1+k-2}$ and
\begin{align*}
	&\frac{2}{9}d^2-B_\tau\\
	\geq &q^{{2(i_{1}+k-1)}}\left( \frac{2}{9}\left(1-\frac{1}{q^{i_1}}\right)^2-  \left(\frac{1}{q^{i_1}}\left( 1-\frac{1}{q^2} \right) + \frac{(q^{i_1}+1)^2}{4q^{2i_1+2}} + \frac{1}{4q^{2k-5}} +\frac{1}{q^{2k-4}} \right)\right)\\
	&+\frac{2}{9} (q^{i_1}-1)^2 q^{2(k-2)}(2q+1)-\frac{(q^{i_1}+1)^2}{4}.
\end{align*}

One may make a computer program to check that one of the assumptions on $q$ and $i_1$ implies $\frac{2}{9}d^2-B_\tau>0$. Therefore, there is at least one non-repeated absolutely irreducible component defined by $W_\ell$ for some $\ell\in \{1,2,\cdots, r\}$.
\endproof

Suppose that $I=\{0,i_1,i_2,\cdots, i_{k-1}\}$ with $i_2=2i_1$ is not an arithmetic progression. Then, by the proof of Proposition \ref{prop:i_2=2i_1}, there does exist $(z_1,\ldots,z_{k-3})\in \mathbb{F}_{q^n}^{k-3}$ such that $N(z_1,\ldots,z_{k-3})\neq0$  and  $M(U,z_1,\ldots,z_{k-3})$  does not divide $L(U,z_1,\ldots,z_{k-3})$. Thus, by Theorem \ref{th:case2}, there exists a non-repeated $\F_{q^n}$-rational absolutely irreducible component in $\cC$. Proceeding as in the proof of Theorem \ref{th:case1}, we can consider subspaces of increasing dimension containing the plane curve $\cC$. Therefore, by using Lemma \ref{le:subvarieties} recursively, we obtain the following result.
\begin{corollary}\label{coro:case2}
	Suppose that $i_2=2i_1$ and $I=\{0,i_1,i_2,\cdots, i_{k-1}\}$ is not an arithmetic progression. If one of the following conditions satisfies,
	\begin{itemize}
		\item $q\geq 7$,
		\item $q=3,4,5$ and $i_1>1$,
		\item $q=2$ and $i_1>2$,
	\end{itemize}
	then the variety $\mathcal{V}_I$ contains a  non-repeated $\mathbb{F}_{q^n}$-rational absolutely irreducible component. 
\end{corollary}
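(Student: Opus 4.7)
The plan is to reduce Corollary \ref{coro:case2} to the plane-curve result of Theorem \ref{th:case2}, and then climb back up to the full variety $\mathcal{V}_I$ by a dimension-by-dimension ascent based on Lemma \ref{le:subvarieties}, mirroring the inductive step at the end of the proof of Theorem \ref{th:case1}.

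The first task is to produce a tuple $(z_1,\ldots,z_{k-3})\in\mathbb{F}_{q^n}^{k-3}$ meeting the two non-degeneracy hypotheses of Theorem \ref{th:case2}: $N(z_1,\ldots,z_{k-3})\neq 0$ and $M(U,z_1,\ldots,z_{k-3})\nmid L(U,z_1,\ldots,z_{k-3})$. The argument behind Proposition \ref{prop:i_2=2i_1}, applied when some $i_j$ fails to be divisible by $i_1$ and supplemented by a reduction to the subfield $\mathbb{F}_{q^{i_1}}$ in the remaining case (where $i_1$ divides every $i_j$ but the set is still not an arithmetic progression, so that after rescaling the exponents one lands in a configuration to which Proposition \ref{prop:i_2=2i_1} does apply), shows that $M$ does not divide $L$ as polynomials in $U,Z_1,\ldots,Z_{k-3}$. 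Consequently the bad loci $\{N=0\}$ and $\{M\mid L\text{ after specialization}\}$ are proper subvarieties of $\overline{\mathbb{F}_q}^{k-3}$, and an elementary counting/Hasse--Weil argument then guarantees that, for $n$ sufficiently large in terms of $i_{k-1}$, an $\mathbb{F}_{q^n}$-rational tuple in the complement does exist.

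Fixing such a tuple, the plane curve $\mathcal{C}$ defined by $F(X,Y)/G(X,Y)=0$ has a non-repeated absolutely irreducible $\mathbb{F}_{q^n}$-rational component by Theorem \ref{th:case2}, whose hypotheses on $(q,i_1)$ coincide with those of the corollary. To lift this component back up to $\mathcal{V}_I$, I fix a flag $\pi\subsetneq S_3\subsetneq\cdots\subsetneq S_{k-1}$ of $\mathbb{F}_{q^n}$-rational affine subspaces of successively increasing dimension, with $\pi$ the plane containing $\mathcal{C}$ and $S_{k-1}$ the ambient space of $\mathcal{V}_I$. At each step the lower-dimensional section $S_j\cap\mathcal{V}_I$ arises as a hyperplane section of $S_{j+1}\cap\mathcal{V}_I$, so Lemma \ref{le:subvarieties} promotes the non-repeated absolutely irreducible $\mathbb{F}_{q^n}$-rational component from level $j$ to level $j+1$; iterating produces the required component inside $\mathcal{V}_I$.

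The main obstacle is the first step: one must check that, for every admissible $I$, the polynomial $M$ fails to divide $L$ (in particular in the delicate subcase where every $i_j$ is a multiple of $i_1$, which is not directly covered by the hypothesis of Proposition \ref{prop:i_2=2i_1}), and that such a non-divisibility survives upon specializing the $Z_j$'s to $\mathbb{F}_{q^n}$-rational values while keeping $N$ nonzero. Once this base plane curve is in hand, Theorem \ref{th:case2} does all the real geometric work, and the remaining climb through the flag of subspaces via Lemma \ref{le:subvarieties} is essentially formal, repeating the ascent already carried out at the end of the proof of Theorem \ref{th:case1}.
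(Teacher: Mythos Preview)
Your overall strategy matches the paper's almost exactly: find a specialization $(z_1,\ldots,z_{k-3})$ meeting the hypotheses of Theorem \ref{th:case2}, apply that theorem to obtain a non-repeated absolutely irreducible $\mathbb{F}_{q^n}$-rational component of the plane curve $\mathcal{C}$, and then climb through a flag of $\mathbb{F}_{q^n}$-rational subspaces using Lemma \ref{le:subvarieties}, just as at the end of the proof of Theorem \ref{th:case1}. The paper is extremely terse here, writing only ``by the proof of Proposition \ref{prop:i_2=2i_1}'' for the first step and ``proceeding as in the proof of Theorem \ref{th:case1}'' together with a recursive appeal to Lemma \ref{le:subvarieties} for the last.

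There is, however, a flaw in your proposed treatment of the subcase where $i_1\mid i_j$ for every $j$. Your subfield reduction to $q'=q^{i_1}$ rescales $I$ to $I'=\{0,1,2,m_3,\ldots,m_{k-1}\}$ with $i_1'=1$; but then Proposition \ref{prop:i_2=2i_1}, read for $I'$, only says that if $M'\mid L'$ then $1\mid m_j$ for all $j$, which is vacuous. So the reduction does \emph{not} ``land in a configuration to which Proposition \ref{prop:i_2=2i_1} does apply,'' and you cannot conclude $M'\nmid L'$ this way. You are right to single this subcase out as the main obstacle (and the paper itself does not spell it out beyond the blanket reference to the proof of Proposition \ref{prop:i_2=2i_1}), but the resolution you sketch does not close it; a separate argument is needed to show that $M(U,z_1,\ldots,z_{k-3})\nmid L(U,z_1,\ldots,z_{k-3})$ for some $\mathbb{F}_{q^n}$-rational choice with $N\neq 0$ in this case.
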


Finally we can prove our main result in this section.
\begin{proof}[Proof of Theorem \ref{th:general_main}]
	By Theorem \ref{th:case1} and Corollary \ref{coro:case2}, there exists an $\F_{q^n}$-rational absolutely irreducible component $\mathcal{V}'_I$ of $\mathcal{V}_I$. 
	The polynomial $G_I(X_1,\ldots,X_{k},1)$ is the product of all the $(q^k-1)/(q-1)$ non-proportional $\mathbb{F}_q$-linear forms of degree $1$ in $X_1,X_2,\ldots,X_{k}$ (repeated only once). The polynomial $F_I(X_1,X_2,\ldots,X_{k},1)$ is a separable linearized polynomial in each of the variables $X_1,\ldots,X_k$ and each (linear) factor of $G_I(X_1,X_2,\ldots,X_{k},1)$ cannot be repeated in $F_I(X_1,X_2,\ldots,X_{k},1)$. This shows that $F_I(X_1,X_2,\ldots,X_{k},1)/G_I(X_1,X_2,\ldots,X_{k},1)$  and $G_I(X_1,X_2,\ldots,X_{k},1)$ have no factors in common. In particular, $\mathcal{V}'_I$ is not a component of $\cG_k$.
	
	Let us consider $\mathcal{V}'_I$ and $\cG_k$ using Theorem \ref{th:zahid_0}. It is clear that the degree of $\cG_k$ is $e=\sum_{j=0}^{k-1}q^j$ and the degree  $\mathcal{V}'_I$ is $f\leq \sum_{j=0}^{k-1}q^{i_j}-e$. By computation,
	\begin{eqnarray*}
		&    &\alpha+ \sqrt{\alpha^2+4\beta}\\
		&=& f^2-3f+2+\sqrt{ f^4-6f^3+13f^2-12f+4 + 20f^{13/3}+4f^2+ 4fe-4f}\\
		&=& f^2-3f+2+\sqrt{ 20f^{13/3}+f^4-6f^3+17f^2-16f+ 4fe+4}.
	\end{eqnarray*}
	Let $\delta_q = \sum_{j=0}^{k-1}q^{-j}$. Then $e=\delta_qq^{k-1}$ and $f<\delta_q q^{i_{k-1}}$. Plug them into the equation above, together with the fact that $i_{k-1}\geq k> 3$, one gets
	\[\alpha+ \sqrt{\alpha^2+4\beta}< \sqrt{26}(\delta_q q^{i_{k-1}})^{13/6}.\]

	When integer $n>\frac{13}{3}i_{k-1}+\log_q(13\cdot2^{10/3})$, we have
	\[q^n>\frac{13}{2}(2q^{i_{k-1}})^{13/3} >\frac{13}{2}(\delta_q q^{i_{k-1}})^{13/3} >\frac{1}{4}\left(  \alpha+ \sqrt{\alpha^2+4\beta} \right)^2.\] 
	By Theorem \ref{th:zahid_0}, there exists a nonsingular $\mathbb{F}_{q^n}$-rational point of $\mathcal{V}'_I$ that is not a point of $\cG_k$.
\end{proof}

\begin{remark}
	In this paper, we have obtained a complete asymptotic classification result for Moore exponent sets for $q>5$ and a partial result for $q=2,3,4,5$. It appears that a complete answer could be also obtained for $q=2,3,4$ and $5$. To prove this result, one possible way is to get a better estimation of the upper bound for the sum of the multiplicities of intersection of two putative components in Theorem \ref{th:case2}.
\end{remark}

\section*{acknowledgment}
The work of Daniele Bartoli was supported by the Italian National Group for Algebraic and Geometric Structures and their Applications (GNSAGA - INdAM). Yue Zhou is partially supported by National Natural Science Foundation of China (No.\ 11771451) and Natural Science Foundation of Hunan Province (No.\ 2019RS2031). This work is partially done during the visit of the second author to the University of Perugia. He would like to thank Daniele Bartoli and Massimo Giulietti for their hospitality.

%\bibliographystyle{abbrv}
%\bibliography{C:/Documents/References/Reference_math}

\end{document}